\theoremstyle{plain}
\newtheorem{teo}{Theorem}[section]
\newtheorem{cor}{Corollary}[section]
\newtheorem{lem}{Lemma}[section]
\newtheorem{prop}{Proposition}[section]
\theoremstyle{definition}
\newtheorem{defn}{Definition}[section]
\theoremstyle{remark}
\newtheorem*{obs}{Remark}
\numberwithin{equation}{section}
\newcolumntype{P}[2]{>{#1\arraybackslash}m{#2}}
\newcommand{\bbZ}{\mathbb{Z}}
\newcommand{\bbN}{\mathbb{N}}
\newcommand{\bbT}{\mathbb{T}}
\newcommand{\GG}{\mathbb{G}}
\newcommand{\VV}{\mathbb{V}}
\newcommand{\EE}{\mathbb{E}}
\newcommand{\GGO}{\overset{\rightarrow}{\mathbb{G}}}
\newcommand{\mcR}{\mathcal{R}}
\newcommand{\mcB}{\mathcal{B}}
\newcommand{\EEO}{\overset{\,\rightarrow}{\mathbb{E}}}
\newcommand{\vxy}{\overset{\,\rightarrow}{xy}}
\newcommand{\dsE}{\mathds{E}}
\newcommand{\dsP}{\mathds{P}}
\newcommand{\dd}{d_1,d_2}
\newcommand{\raiz}{\varnothing}
\newcommand{\dist}{\textnormal{dist}}
\newcommand{\YY}{\mathcal{Y}}
\newcommand{\ind}{\mathds{1}}
\newcommand{\rlinf}{\underaccent{\bar}{r}}
\newcommand{\rlsup}{\bar{r}}
\newcommand{\FM}[2]{\ensuremath{\textnormal{FM}(#1,#2)}}
\newcommand{\APM}[2]{\ensuremath{\textnormal{APM}(#1,#2)}}
\newcommand{\pc}[1]{\ensuremath{{p_c}(#1)}}
\newcommand{\s}[2]{\ensuremath{#1 \rightarrow #2}}
\newcommand{\ns}[2]{\ensuremath{#1 \nrightarrow #2}}
\newcommand{\p}[3]{\ensuremath{#1 \stackrel {#3}{\rightarrow} #2}}
\newcommand{\np}[3]{\ensuremath{#1 \stackrel {#3}{\nrightarrow} #2}}
\newcommand{\m}[3]{\ensuremath{#1 \stackrel {#3}{\leadsto} #2 }}
\begin{document}

\title[Frogs on biregular trees]{Phase transition for the frog model\\
on biregular trees}

\author{Elcio Lebensztayn}
\address[E. Lebensztayn]{Institute of Mathematics, Statistics and Scientific Computation\\
University of Campinas -- UNICAMP\\
Rua S\'ergio Buarque de Holanda 651, 13083-859, Campinas, SP, Brazil.}
\email[E. Lebensztayn]{\randomize{lebensz@unicamp.br}}
\thanks{The authors are thankful to the National Council for Scientific and Technological Development -- CNPq (Jaime Utria's grant No.~140887/2017-2), and the S\~ao Paulo Research Foundation -- FAPESP (Grant No.~2017/10555-0).}

\author{Jaime Utria}
\address[J. Utria]{Institute of Mathematics and Statistics\\
Fluminense Federal University -- UFF\\
Rua Professor Marcos Waldemar de Freitas Reis s/n, 24210-201, Niter\'oi, RJ, Brazil.}
\email[J. Utria]{\randomize{jutria@id.uff.br}}

\date{}

\subjclass[2010]{60K35, 60J85, 82B26, 82B43}

\keywords{Frog model, biregular tree, phase transition}

\begin{abstract}
We study the \emph{frog model with death} on the biregular tree $\mathbb{T}_{d_1,d_2}$. 
Initially, there is a random number of active and inactive particles located on the vertices of the tree. 
Each active particle moves as a discrete-time independent simple random walk on $\mathbb{T}_{d_1,d_2}$ and has a probability of death $(1-p)$ before each step. 
When an active particle visits a vertex which has not been visited previously, the inactive particles placed there are activated.
We prove that this model undergoes a phase transition: for values of $p$ below a critical probability $p_c$, the system dies out almost surely, and for $p > p_c$, the system survives with positive probability. 
We establish explicit bounds for $p_c$ in the case of random initial configuration.
For the model starting with one particle per vertex, the critical probability satisfies $p_c(\mathbb{T}_{d_1,d_2}) = 1/2 + \Theta(1/d_1+1/d_2)$ as $d_1, d_2 \to \infty$.
\end{abstract}

\maketitle

\baselineskip=20pt

\section{Introduction}

This paper addresses the issue of phase transition for the \textit{frog model with death}, a discrete-time growing system of simple random walks on a rooted graph $\GG$, which is described as follows.
Initially there is an independent random number of particles at each vertex of~$\GG$. 
All particles are inactive at time zero, except for those that might be placed at $\raiz$, the root of $\GG$. 
Each active particle moves as a discrete-time independent simple random walk (SRW) on the vertices of $\GG$, and has a probability of death $(1-p)$ before each step. When an active particle visits an inactive particle, that particle becomes active and starts to walk, performing exactly the same dynamics, independently of everything else. 
In the literature, the particles are often referred to as frogs, whose possible states are described as sleeping (inactive) and awake (active). 
This process can be thought as a model for describing rumor (or infection) spreading. 
One can think of every awake frog as an informed (or infected) agent, which shares the rumor with (or infects) a sleeping frog at the first time they meet.

In the last decades, there has been a growing interest in understanding the behavior of stochastic processes on more general graph structures than the $d$-dimensional integer lattice $\bbZ^d$ and the homogeneous tree~$\bbT_d$ of degree $(d + 1)$.
In particular, the study of phase transitions and critical phenomena for stochastic systems on nonhomogeneous trees and nonamenable quasi-transitive graphs has become an important research area.
Among the models studied, are percolation, the contact process, and branching random walks. 
For the frog model, most of the work has involved studying the process on $\bbZ^d$, $\bbT_d$ and recently on $d$-ary trees.
As far as we know, only \citet{JR} considers another kind of tree, proving the recurrence of the process (without death) on a $3,2$-alternating tree (in which the generations of vertices alternate between having $2$ and $3$ children).

The first published paper dealing with the frog model (with $p=1$, $\GG=\bbZ^d$) is due to \citet{TW}, where it was referred to as the \textquotedblleft egg model\textquotedblright. They proved that, starting from the one-particle-per-vertex initial configuration, almost surely infinitely many frogs will visit the origin for all $d \geq 3$ (that is, although each frog is individually transient, the process is recurrent).
\citet{FIRE} exhibits the critical rate at which the frog model with Bernoulli($\alpha/||x||^2$) sleeping frogs at each $x\in\bbZ^d\setminus\{0\}$ changes from transience to recurrence. 
A similar result is obtained by \citet{JJHa} for the model on $d$-ary trees, with Poisson($\mu$) sleeping frogs at each vertex. 
More precisely, the authors prove that the model undergoes a phase transition between transience and recurrence, as the initial density $\mu$ of frogs increases. 
For the model starting with one frog per vertex, \citet{JJHb} establish that there is a phase transition in the dimension of the tree, by proving recurrence for $d=2$ and transience for $d\geq 5$.
Based on simulations, they conjecture that the model is recurrent for $d=3$, and transient for $d=4$. 

In \citet{ST}, for the frog model without death on $\bbZ^d$, it is proved that, starting from the one-particle-per-vertex initial configuration, the set of the original positions of all awake frogs, rescaled by the elapsed time, converges to a nonempty compact convex set.
\citet{STR} prove the same statement in the case of random initial configuration; these results are known as shape theorems. 
For a continuous-time version of the frog model, a limiting shape result is stated by \citet{STCT}. 
We refer to \citet{FIS} for a survey on some results for the model and its variations.

Regarding the frog model with death, the existence of phase transition as $p$ varies was first studied by \citet{PT}.
As we will detail later, the occurrence of phase transition means that there is a nontrivial value $p_c$ of $p$ separating the regimes of extinction and survival of the process.
\citet{PT} obtain lower bounds on~$p_c$ for general graphs and bounded degree graphs, and establish the existence of phase transition on $\bbZ^d$ and $\bbT_{d}$, for $d\geq 2$, under rather broad conditions.
\citet{IUB} prove that the critical probability for the frog model on a homogeneous tree of degree $(d + 1)$ is at most $(d + 1)/(2d)$; that result is an improvement of the upper bound stated by \citet{Mono}, namely, $(d + 1)/(2d - 2)$.
Further improvements on the upper bound for this critical probability were recently obtained by \citet{FMRT}, using Renewal Theory, and by \citet{NUB}.
For more details on the subject, see these papers and references therein.

The aim of the present paper is to deepen the study of the critical phenomenon of the frog model, going beyond $\bbZ^d$ and homogeneous trees.
We consider the model on a specific class of nonhomogeneous trees, namely, biregular trees. 
In the main results, we prove a sufficient condition for the existence of phase transition, and present explicit bounds for the critical probability, in the case of random initial configuration.
Since there is more than one parameter measuring the size of such trees, bounds on the critical parameter are harder to get at.
The asymptotic behavior of the critical probability for large values of the dimension of the tree is also derived.
We hope that our study will stimulate further works regarding the occurrence of phase transition on other nonhomogeneous trees and nonamenable graphs.

\section{Formal description of the model and main results }

We start off with some basic definitions and notation of Graph Theory. 
Let $\GG = (\VV, \EE)$ be an infinite connected locally finite graph, with vertex-set~$\VV$ and edge-set~$\EE$.
A vertex $\raiz \in \VV$ is fixed and called the \textit{root} of $\GG$.
We denote an unoriented edge with endpoints $x$ and $y$ by $xy$. 
Vertices $x$ and $y$ are said to be \textit{neighbors} if they belong to a common edge~$xy$; we denote this by $x\sim y$.
The \textit{degree} of a vertex is the number of its neighbors.
A \textit{path} of length~$n$ from $x$ to $y$ is a sequence $x=x_0, \dots, x_n=y$ of vertices such that $x_i\sim x_{i+1}$ for all $i=0, \dots, n-1$. 
The \textit{graph distance} $\dist(x, y)$ between $x$ and $y$ is the minimal length of a path connecting the two vertices; the \textit{level} of $x$ is $\dist(\raiz, x)$. 
A \textit{tree} is a connected graph without loops or cycles, where by a cycle in a graph we mean a sequence of vertices $x_0, \dots, x_n$, $n\geq 3$, with no repetitions besides $x_n=x_0$. 
A graph is \textit{bipartite} if its vertex-set~$\VV$ can be partitioned into two subsets $\VV_1$ and $\VV_2$, in such a form that every edge joins a vertex of $\VV_1$ to a vertex of $\VV_2$.
For $d_1 \geq 1$ and $d_2 \geq 1$, we denote by $\bbT_{\dd}$ the $(\dd)$-\textit{biregular tree}, which is the bipartite tree where the degree of a vertex is $(d_1+1)$ or $(d_2+1)$, according to whether the level of the vertex is even or odd.
In this case, the class $\VV_1$ (\textit{resp.}~$\VV_2$) is the set of vertices at even (\textit{resp.}~odd) distance from the root.
From now on, a vertex $x\in\VV_{i}$ will be called a \textit{type~$i$ vertex}.
Notice that $\bbT_{1, 1}$ is isomorphic to $\bbZ$. 
See Figure \ref{Fg: bitree} for an illustration of $\bbT_{2, 4}$.

\vspace{0.4cm}
\begin{figure}[ht]
\centering
\includegraphics[scale=0.6]{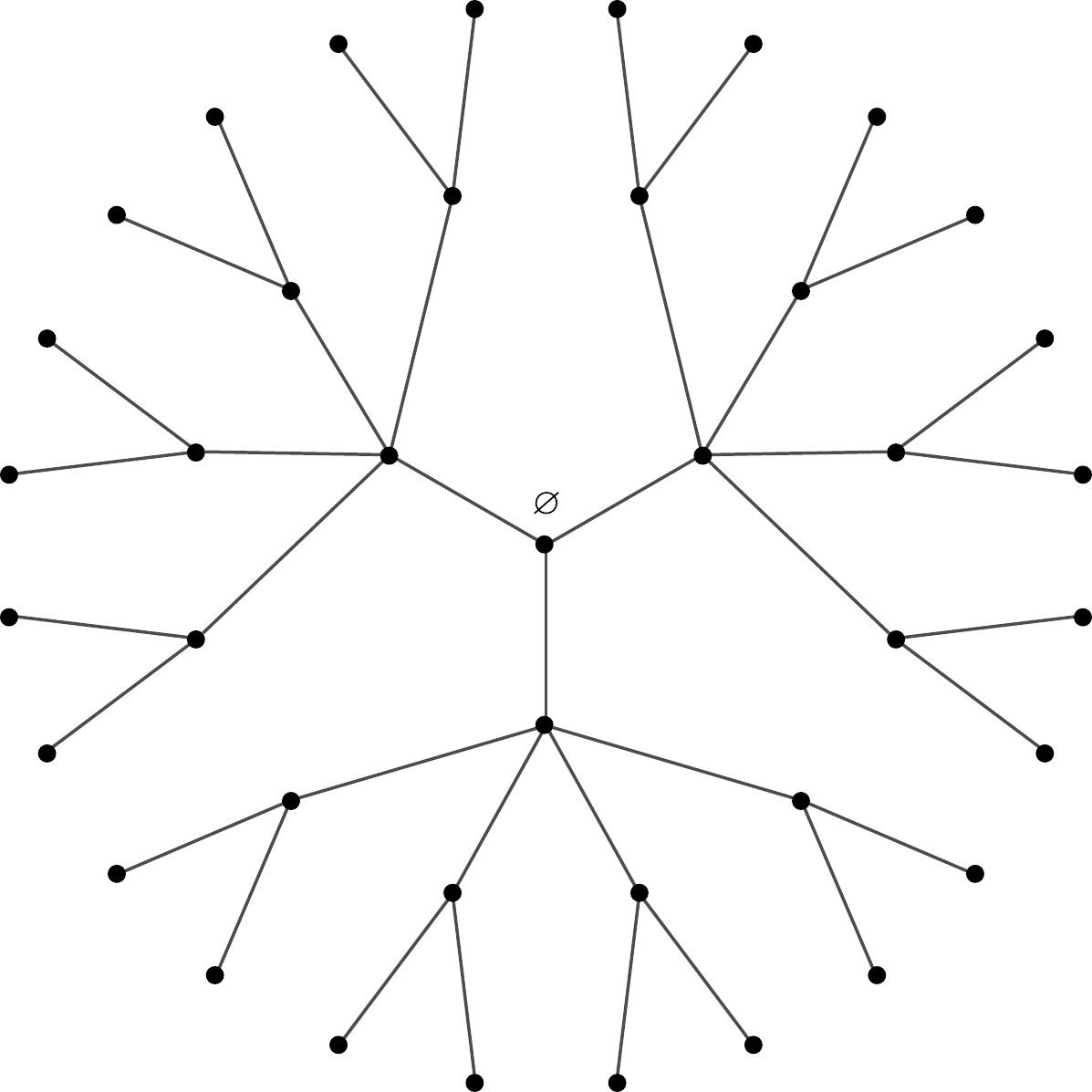}
\caption{The first three levels of the biregular tree $\bbT_{2, 4}$.}
\label{Fg: bitree}
\end{figure}

Now we describe the frog model in a formal way, keeping the notation of \citet{PT} and \citet{IUB}, whenever possible. 
We write $\bbN=\{1, 2, \dots\}$ and $\bbN_0=\bbN\cup \{0\}$. 
Let $\eta$ be a random variable assuming values in $\bbN_0$, and define $\rho_{k}:=\dsP[\eta=k]$, $k \in\bbN_0$.
We suppose that $\dsP[\eta \geq 1] > 0$, that is, $\rho_0<1$. 
For $s \in [0, 1]$, let $\varphi(s) := \dsE[s^\eta]$ be the \textit{probability generating function} of $\eta$.
To define the frog model, let $\{\eta(x):x\in\VV\}$, $\{(S_n^x(k))_{ n\in\bbN_0}; \; k\in\bbN, \, x\in\VV\}$ and $\{(\Xi_p^x(k));\;k\in\bbN, \, x\in\VV \}$ be independent sets of random objects defined as follows. For each $x\in\VV$, $\eta(x)$ has the same law as $\eta$, and gives the initial number of frogs at vertex~$x$. 
If $\eta(x)\geq 1$, then for each $k\in\{1, \dots, \eta(x)\}$, $(S_n^x(k))_{ n\in\bbN_0}$ is a discrete-time SRW on $\GG$ starting from $x$, and $\Xi_p^x(k)$ is a random variable whose law is given by $\dsP[\Xi_p^x(k)=j]=(1-p)p^{j-1}$, $j\in\bbN$, where $p \in [0, 1]$ is a fixed parameter. 
These random objects describe respectively the trajectory and the lifetime of the $k$-th frog placed initially at $x$.
Thus, the $k$-th frog at vertex $x$, whenever it is awoken, follows the SRW $(S_n^x(k))_{ n\in\bbN_0}$, and disappears at the instant it reaches a total of $(\Xi_p^x(k)-1)$ jumps. 
At the moment the frog disappears, it is not able to awake other frogs (first the frog decides whether or not to survive, and only after that it is allowed to jump). 
There is no interaction between awake frogs. 
At time $n = 0$, only the frogs that might be placed at the root of~$\GG$ are awake.
We call this model the \textit{frog model} on $\GG$, with survival parameter $p$ and initial configuration ruled by $\eta$, and denote it by $\FM{\GG}{p, \eta}.$ 

\begin{defn}
A particular realization of the frog model \textit{survives} if for every instant of time there is at least one awake frog. 
Otherwise, we say that it \textit{dies out}.
\end{defn}

A coupling argument shows that $\dsP[\FM{\GG}{p, \eta} \text{ survives}]$ is a nondecreasing function of~$p$, and therefore we define the \textit{critical probability} as 
\[\pc{\GG, \eta} = \inf\left\{p:\dsP[\FM{\GG}{p, \eta} \text{ survives}]>0\right\}. \]
As usual, we say that $\FM{\GG}{p, \eta}$ exhibits \textit{phase transition} if $ \pc{\GG, \eta} \in (0, 1)$.
For $\GG = \bbT_{\dd}$ and $\eta \equiv 1$ (one-particle-per-vertex initial configuration), we simply write $\pc{\bbT_{\dd}}$. 

As proved by \citet[Theorem~1.1]{PT}, if $d_1=d_2=1$, then under the condition $\dsE \log (\eta \vee 1)< \infty$, we have that the frog model dies out almost surely for every $p<1$, that is, $\pc{\bbZ, \eta}=1$. 
The picture is quite different for higher degrees. 
Indeed, \citet[Theorems~1.2 and 1.5]{PT} prove that the frog model on $\bbT_d$ exhibits phase transition for every $d\geq 2$, provided that $\rho_0<1$ and $\dsE\eta^\delta<\infty$ for some $\delta>0$. 
Here we extend this result to the case where the process lives on $\bbT_{\dd}$ (assuming $d_1\geq 2$ or $d_2 \geq 2$).

We begin by showing a sufficient condition to guarantee the almost sure extinction of the frog model on bounded degree trees for small enough $p$.

\begin{teo}
\label{T: SCE}
Let $\GG$ be an infinite tree of bounded degree, and suppose that there exists $\delta > 0$ such that $\dsE\eta^\delta<\infty$.
Then \(\pc{\GG, \eta} > 0\).
\end{teo}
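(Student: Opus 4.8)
The plan is to show directly that for all sufficiently small $p>0$ the model $\FM{\GG}{p,\eta}$ dies out almost surely, which gives $\pc{\GG,\eta}>0$. The central object is the random set $\mcR$ of all vertices that are ever visited by an awake frog. Since each frog travels along a connected path from its birthplace, and every birthplace is itself a visited vertex, $\mcR$ is connected and, on the survival event, contains the root $\raiz$. As each vertex $x$ carries only $\eta(x)<\infty$ frogs, every one of which has an almost surely finite (geometric) lifetime, a \emph{finite} range forces the process to die out. Hence it suffices to prove $\dsE|\mcR|=\sum_{x\in\VV}\dsP[x\text{ is visited}]<\infty$ for small $p$, for then $|\mcR|<\infty$ almost surely and survival has probability zero.

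To bound $\dsP[x\text{ is visited}]$ I would use a genealogical activation-chain argument. If $x$ is visited, then tracing the first-passage history backwards and erasing loops yields a sequence of distinct vertices $\raiz=z_0,z_1,\dots,z_r=x$ such that, for each $i$, at least one frog born at $z_i$ reaches $z_{i+1}$ within its lifetime; denote this last event by $\{z_i\Rightarrow z_{i+1}\}$, understood as a statement about the free walks and lifetimes of the frogs initially at $z_i$ alone. Since $z_0,\dots,z_{r-1}$ are distinct, these events depend on disjoint, hence independent, families of frogs, so the probability of their intersection factorizes. A union bound over all chains then gives $\dsP[x\text{ visited}]\le\sum_{r\ge0}\sum_{\text{chains}}\prod_{i=0}^{r-1}\dsP[z_i\Rightarrow z_{i+1}]$.

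The step I expect to be the crux is controlling the chain factors under the \emph{weak} moment hypothesis: only $\dsE\eta^{\delta}<\infty$ is assumed, possibly with $\delta<1$ and $\dsE\eta=\infty$, so one cannot naively count activated frogs by linearity, since a single atypically crowded vertex might a priori carry the infection arbitrarily far. The remedy is to absorb the fluctuations of $\eta$ into the factors themselves. Writing $m_i=\dist(z_i,z_{i+1})$ and letting $q$ be the probability that a single walk from $z_i$ hits $z_{i+1}$ before dying, a frog must survive at least $m_i$ death tests, so $q\le p^{m_i}$; averaging $1-(1-q)^{\eta}$ over $\eta$ gives $\dsP[z_i\Rightarrow z_{i+1}]=1-\varphi(1-q)\le 1-\varphi(1-p^{m_i})$. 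The key fractional-moment inequality is $1-\varphi(1-s)=\dsE[1-(1-s)^{\eta}]\le\dsE[\min(1,s\eta)]\le\dsE\eta^{\delta}\,s^{\delta}$, using $\min(1,t)\le t^{\delta}$ for $t\ge0$ and $0<\delta\le1$ (and reducing to $\delta\le1$ at no cost). Hence each factor is at most $C\,p^{\delta m_i}$ with $C=\dsE\eta^{\delta}<\infty$.

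What remains is a geometric summation, and this is where bounded degree enters. Put $W(x,y)=C\,p^{\delta\,\dist(x,y)}$ for $x\neq y$ and $W(x,x)=0$; then the chain sum is at most $\sum_{r\ge0}(W^{r})(\raiz,x)$, and summing over $x$ yields $\sum_{x}\dsP[x\text{ visited}]\le\sum_{r\ge0}\theta^{r}$, where $\theta=\sup_{x}\sum_{y\neq x}W(x,y)$. Because a tree of maximum degree $D$ has at most $D^{m}$ vertices at distance $m$, one gets $\theta\le C\sum_{m\ge1}(Dp^{\delta})^{m}$, which is finite for $p<D^{-1/\delta}$ and tends to $0$ as $p\to0$. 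Fixing $p_0>0$ small enough that $\theta<1$, the geometric series converges and $\dsE|\mcR|<\infty$ for every $p<p_0$. Therefore the process dies out almost surely for such $p$, and $\pc{\GG,\eta}\ge p_0>0$.
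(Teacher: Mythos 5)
Your proof is correct, and it reaches the conclusion by a genuinely different route than the paper's. Both arguments are first-moment bounds, and in fact your per-link estimate coincides with the paper's: since a frog can reach a vertex at distance $k$ only if its lifetime exceeds $k$, your bound $\dsP[\s{x}{y}]\le 1-\varphi(1-p^{\dist(x,y)})$ is exactly the edge probability of the dominating \emph{disk-percolation} model used in the paper, where the range $\mcR_x$ is replaced by the ball $\mcB_x=\bigcup_{k}\{y:\dist(x,y)<\Xi_p^x(k)\}$. The proofs diverge after that point. The paper views $\GG$ as a subgraph of the homogeneous tree $\bbT_d$, $d=\max\{\Delta-1,2\}$, dominates the disk-percolation cluster of the root by a Galton--Watson process with offspring law $|\mcB_x\setminus\{x\}|$, and proves that process subcritical for small $p$; the weak moment hypothesis is absorbed there by splitting the sum defining $\dsE|\mcB_x\setminus\{x\}|$ at the truncation level $\hat k(i,p)=\lfloor \log i/\log(1/p)\rfloor$. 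You instead bound $\dsE|\mcR|$ directly by a union bound over activation chains of distinct vertices --- valid because activation times strictly increase along the genealogy, so the chain is automatically self-avoiding (no loop-erasure is actually needed), and because links based at distinct vertices involve disjoint, hence independent, families of frogs --- and you absorb the weak moment hypothesis via the fractional-moment inequality $1-\varphi(1-s)\le \dsE[\min(1,s\eta)]\le \dsE[\eta^\delta]\,s^\delta$. Your route is more self-contained (no embedding, no branching-process machinery), the fractional-moment inequality is cleaner than the paper's truncation estimate, and, since the only geometric input is the sphere bound $|\{y:\dist(x,y)=m\}|\le D^m$, your argument proves the statement verbatim for every bounded-degree graph, not only for trees. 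What the paper's route buys is a reusable intermediate object (disk percolation, which has been studied in its own right) and a reduction to standard Galton--Watson subcriticality, with exact sphere sizes supplied by the embedding into $\bbT_d$.
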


\begin{obs}
This result is an extension of Theorem~1.2 in \citet{PT}, which is stated for a homogeneous tree~$\bbT_d$, $d \geq 2$.
Note, however, that one can not prove Theorem~\ref{T: SCE} by using a direct coupling and the corresponding result for homogeneous trees.
As shown by \citet{Mono}, in general, the critical parameter of the frog model is not a monotonic function of the graph.
A longstanding open question is whether, for a class of graphs (for example, trees), $\GG_1 \subset \GG_2$ in this class implies that $\pc{\GG_2,\eta} \leq \pc{\GG_1,\eta}$.
This problem was first raised by \citet{PT} (for $\bbZ^d$), and discussed again in \citet{FIS}, \citet{Mono} and \citet{IUB}.
\end{obs}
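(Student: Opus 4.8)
The plan is to establish $\pc{\GG,\eta}>0$ by an argument intrinsic to $\GG$, since, as the Remark explains, one cannot simply couple $\FM{\GG}{p,\eta}$ to the frog model on a homogeneous tree $\bbT_d$ containing or contained in $\GG$ and import positivity of $p_c$ from it: the non-monotonicity of $p_c$ under subgraph inclusion established by \citet{Mono} means that $\GG\subset\bbT_d$ yields no inequality between $\pc{\GG,\eta}$ and $\pc{\bbT_d,\eta}$ in either direction. I will therefore use only that $\GG$ is a tree with $\Delta:=\sup_{x\in\VV}\deg(x)<\infty$. Let $\mcR$ denote the set of vertices ever visited by an active frog. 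Because $\GG$ is a tree, $\mcR$ is a subtree containing $\raiz$, and $\FM{\GG}{p,\eta}$ can survive only if $\mcR$ is infinite, which on a tree happens exactly when $\mcR$ reaches every level. Writing $A_n:=\{\mcR\text{ reaches level }n\}$, the events are nested, so it suffices to prove that, for all $p$ below some $p_0=p_0(\Delta,\delta)$, one has $\dsP[A_n]\to 0$ as $n\to\infty$; this forces $\mcR$ finite almost surely, hence extinction.

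To bound $\dsP[A_n]$, I would fix a vertex $z$ at level $n$ and estimate the probability that $z\in\mcR$. On a tree the unique $\raiz$--$z$ path $\raiz=v_0,\dots,v_n=z$ must be wholly contained in $\mcR$, and for the range to advance from $v_i$ to $v_{i+1}$ some active frog reaching $v_i$ must take the single step onto $v_{i+1}$ while still alive. A frog present at $v_i$ crosses a prescribed incident edge during its geometric lifetime with probability at most $c\,p$ for a universal constant $c$, since the expected number of steps a frog takes is $p/(1-p)$ and each step crosses a given edge at most once; so with the $\eta(v_i)$ frogs born at $v_i$ the chance that the edge $v_iv_{i+1}$ is ever traversed is at most $\min(1,\,c\,p\,\eta(v_i))$. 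Organising this into a union bound over the chains of frogs that could carry the range along the path, and using that the field $\{\eta(v)\}_{v\in\VV}$ is i.i.d., the successive factors decouple and
\[
\dsP[z\in\mcR]\ \le\ \prod_{i=0}^{n-1}\dsE\big[\min(1,\,c\,p\,\eta)\big]\ =\ \big(\dsE\big[\min(1,\,c\,p\,\eta)\big]\big)^{n}.
\]

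The moment hypothesis now enters cleanly. Taking $\delta\le 1$ without loss of generality (a finite moment of order greater than $1$ gives $\dsE\eta<\infty$, which only helps), the elementary inequality $\min(1,x)\le x^{\delta}$, valid for $x\ge 0$ and $\delta\in(0,1]$, yields
\[
\dsE\big[\min(1,\,c\,p\,\eta)\big]\ \le\ (c\,p)^{\delta}\,\dsE\eta^{\delta}\ =\ C\,p^{\delta},
\]
with $C=C(\delta)<\infty$ by assumption. Since there are at most $\Delta(\Delta-1)^{n-1}$ vertices at level $n$, summing over $z$ gives
\[
\dsP[A_n]\ \le\ \Delta(\Delta-1)^{n-1}\,\big(C\,p^{\delta}\big)^{n}\ \le\ \big((\Delta-1)\,C\,p^{\delta}\big)^{n},
\]
which tends to $0$ as $n\to\infty$ whenever $(\Delta-1)\,C\,p^{\delta}<1$, i.e.\ for all $p<p_0(\Delta,\delta)$. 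Hence $\dsP[\FM{\GG}{p,\eta}\text{ survives}]=0$ for such $p$, and $\pc{\GG,\eta}\ge p_0>0$.

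The delicate step, and the one I expect to demand the most work, is the combinatorial decoupling leading to the factorised bound for $\dsP[z\in\mcR]$: a single frog may traverse several consecutive edges of the path, and $v_i$ may also be reached by frogs originating off the path, so the product above is not literally an independence statement. Making it rigorous requires a careful union bound over the genealogical chains of frogs that realise $\{z\in\mcR\}$, arranged so that each edge traversal is charged to the $\eta$-value of exactly one vertex, thereby preserving the independence of the field $\{\eta(v)\}$ that drives the factorisation. The replacement of the per-edge crossing probability by the capped quantity $\min(1,c\,p\,\eta)$, rather than the unbounded $c\,p\,\eta$, is precisely what keeps the estimate compatible with the heavy tails permitted by $\dsE\eta^{\delta}<\infty$, and is the reason the threshold appears at order $p^{\delta}$ rather than order $p$.
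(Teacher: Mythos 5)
Your overall plan --- proving $\pc{\GG,\eta}>0$ by an argument intrinsic to $\GG$, with no coupling of the frog model itself to a homogeneous tree --- is exactly what the Remark calls for, and your device $\min(1,x)\le x^{\delta}$ for converting the hypothesis $\dsE\eta^{\delta}<\infty$ into a factor $C p^{\delta}$ is sound. But the central step of your argument, the factorized bound
\[
\dsP[z\in\mcR]\ \le\ \bigl(\dsE[\min(1,\,c\,p\,\eta)]\bigr)^{n},
\]
has a genuine gap, which you flag but do not close, and as derived it is simply false: the edge $v_iv_{i+1}$ of the geodesic need not be crossed by one of the $\eta(v_i)$ frogs born at $v_i$. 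It can be crossed by a frog born anywhere in the root-side component of that edge --- at an earlier path vertex, or in a side branch off the path --- so the number of frogs that may attempt the crossing is in no way controlled by $\eta(v_i)$, and the per-edge estimate $\min(1,c\,p\,\eta(v_i))$ does not bound the traversal probability. When you repair this as you yourself suggest --- a union bound over activation chains $\raiz=y_0,y_1,\dots,y_m=z$ of \emph{distinct} vertices with $y_{j+1}\in\mcR_{y_j}$, which is the only way to charge each advance to the $\eta$-value of a single vertex while keeping independence (the $y_j$ being distinct, the events $[y_{j+1}\in\mcR_{y_j}]$ involve disjoint randomness) --- the structure of the estimate changes: chain steps have arbitrary lengths $k\ge 1$, the vertices $y_j$ are not confined to the geodesic, and the per-step factor becomes (number of vertices at distance $k$) times $\dsE[\min(1,\eta\,p^{k})]\le \Delta(\Delta-1)^{k-1}p^{k\delta}\,\dsE\eta^{\delta}$, summed over $k$. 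You then no longer have a product of $n$ per-edge factors but a first-moment bound on the expected number of $m$-step activation chains, i.e., a subcritical branching-process comparison.

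That repaired argument is, in different packaging, precisely the paper's proof of Theorem~\ref{T: SCE}: there the frog model is dominated by the disk-percolation model with balls $\mcB_x$, for which $\dsP[y\in\mcB_x]=\dsE\bigl[1-(1-p^{k})^{\eta}\bigr]\le\dsE[\min(1,\eta\,p^{k})]$ is exactly your quantity; the disk model is then coupled monotonically into $\bbT_d$ (legitimate precisely because it ignores trajectories, side-stepping the non-monotonicity of \citet{Mono} that blocks a naive coupling of the frog model), and the dominating Galton--Watson process with mean $\dsE|\mcB_x\setminus\{x\}|$ is shown to be subcritical for small $p$. One point in your favor: once the chain/branching formulation is adopted, your inequality $1-(1-p^{k})^{i}\le\min(1,i\,p^{k})\le i^{\delta}p^{k\delta}$ (for $\delta\le 1$) yields $\dsE|\mcB_x\setminus\{x\}|\le \dsE\eta^{\delta}\sum_{k\ge 1}(d+1)d^{k-1}p^{k\delta}<1$ for all small $p$, which is a cleaner route to subcriticality than the truncation estimate~\eqref{F: Phi} with $\hat{k}(i,p)$ used in the paper. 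So the missing idea is not the moment computation but the correct combinatorial skeleton: the geodesic-path factorization must be abandoned in favor of the union bound over activation chains (equivalently, the Galton--Watson domination), and without that your proof does not stand as written.
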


Under the condition that the mean number of frogs placed initially at every vertex is finite, we derive a nontrivial lower bound for the critical probability on $\bbT_{\dd}$.

\begin{teo}
\label{T: LB}
Suppose that $\dsE\eta<\infty$. 
If $d_1\geq 2$ or $d_2 \geq 2$, then 
\[\pc{\bbT_{\dd}, \eta}\geq \sqrt{\frac{(d_1+1)(d_2+1)}{[d_1(\dsE\eta+1) + 1][d_2(\dsE\eta+1) +1]}}.\]
\end{teo}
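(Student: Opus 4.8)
The plan is to show that the frog model dies out almost surely whenever $p$ falls below the stated threshold, by dominating the process by a suitable branching structure and computing when that structure is subcritical. The key quantity to control is the expected total number of distinct vertices ever visited (equivalently, the frogs ever activated), and the goal is to show this expectation is finite—hence the process dies out a.s.—precisely when $p$ is below the bound.

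First I would set up the standard ``exploration'' coupling in which each awake frog contributes activations only along newly visited vertices. On a tree this is especially clean: a frog launched from a vertex $x$ performs a killed SRW (killed with probability $1-p$ before each step), and I would count the expected number of \emph{fresh} vertices it visits before dying, weighted by the mean number $\dsE\eta$ of sleeping frogs waiting at each such vertex. Because the tree is biregular, a frog alternates between type-$1$ and type-$2$ vertices, so the relevant branching operator is naturally a two-type object. The central computation is the expected number of never-before-visited neighbors a frog reaches; starting from a type-$i$ vertex, a single killed-walk step lands on one of $d_i + 1$ neighbors (all of type $3-i$), each new with probability $p$ of the frog surviving that step, and each carrying on average $\dsE\eta$ sleeping frogs to activate.

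Concretely, I would build a two-type Galton–Watson process whose mean matrix $M$ records the expected number of type-$j$ frogs activated by a single type-$i$ frog over its (geometric) lifetime. A frog at a type-$1$ vertex survives each step with probability $p$ and, conditioned on surviving, steps to a type-$2$ neighbor; tracking the $d_1$ ``forward'' directions (one edge leads back toward already-visited territory) and weighting by $(\dsE\eta + 1)$ to count the activated frog together with the one continuing the walk, the off-diagonal entries of $M$ take the form $p\,d_i(\dsE\eta+1)/(d_i+1)$ up to the bookkeeping for self-avoidance on the tree. The process dies out a.s.\ when $M$ is subcritical, i.e.\ when its Perron eigenvalue satisfies $\lambda(M) \le 1$. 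Since $M$ is $2\times 2$ with zero diagonal, $\lambda(M) = \sqrt{M_{12} M_{21}}$, and solving $\sqrt{M_{12}M_{21}} \le 1$ for $p$ yields exactly
\[
p \le \sqrt{\frac{(d_1+1)(d_2+1)}{[d_1(\dsE\eta+1)+1][d_2(\dsE\eta+1)+1]}}.
\]
I would then invoke the subcriticality of the dominating two-type branching process to conclude that the expected number of activated frogs is finite, the frog model dies out almost surely, and therefore $\pc{\bbT_{\dd},\eta}$ is at least this threshold.

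The main obstacle I expect is making the domination \emph{rigorous} rather than heuristic: the genuine frog process is \emph{not} a branching process, because distinct frogs' trajectories interact through the shared set of visited vertices (a vertex activated by one frog cannot be ``re-activated'' by another, and walks may collide), so the branching comparison must be an honest stochastic upper bound. The careful point is to define an auxiliary process in which each activation spawns an independent killed SRW \emph{ignoring} collisions and re-counting vertices—this only increases the population, giving a valid domination—and then to verify that the self-avoiding bookkeeping on the tree (the single edge leading back toward the origin) is handled correctly so that the mean matrix entries are not overcounted, which is what sharpens the crude bound into the exact expression above. A secondary technical point is passing from finiteness of the expected population to almost-sure extinction, which follows from the subcritical branching comparison but should be stated cleanly.
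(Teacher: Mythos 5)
Your overall strategy is the same as the paper's: dominate the frog model by a two-type Galton--Watson process, compute the $2\times 2$ mean matrix $M$ with zero diagonal, and read off the threshold from $\lambda(M)=\sqrt{M_{12}M_{21}}$. However, there is a genuine gap, and it sits exactly in the step you defer as ``bookkeeping for self-avoidance.'' Your proposed entries $M_{12}=p\,d_1(\dsE\eta+1)/(d_1+1)$ and $M_{21}=p\,d_2(\dsE\eta+1)/(d_2+1)$ count only the $d_i$ ``forward'' steps, i.e.\ you discard the frog when it steps onto the already-visited neighbor. This is wrong on two counts. First, the resulting process does \emph{not} dominate the frog model: a frog that steps back onto a visited vertex is still alive and may later move to unvisited territory and wake new frogs, so any honest dominating process must keep it as a particle (of the type of the vertex it lands on), contributing a $+1$ to the mean offspring count even though it activates nobody on that step. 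Second, your algebra does not in fact ``yield exactly'' the stated bound: solving $\sqrt{M_{12}M_{21}}\le 1$ with your entries gives
\begin{equation*}
p \;\le\; \sqrt{\frac{(d_1+1)(d_2+1)}{d_1 d_2\,(\dsE\eta+1)^2}},
\end{equation*}
which is strictly \emph{larger} than the theorem's threshold, since $d_1 d_2(\dsE\eta+1)^2 < [d_1(\dsE\eta+1)+1][d_2(\dsE\eta+1)+1]$. So as written you would be claiming a stronger lower bound than the theorem, obtained from a comparison process that is not actually an upper bound for the frog model.

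The fix is precisely the paper's choice of offspring law: a type-$i$ particle dies with probability $1-p$; with probability $p/(d_i+1)$ it steps to the one used-up neighbor and produces just itself (one particle of the other type); with probability $p\,d_i/(d_i+1)$ it steps to a fresh vertex and produces itself plus an independent copy of $\eta$ newly awakened frogs. (The justification for singling out one neighbor is that every awake frog sits at a vertex having at least one neighbor whose frogs were already awakened.) This gives
\begin{equation*}
M_{12}=\frac{p\,[\,1+d_1(\dsE\eta+1)\,]}{d_1+1}, \qquad
M_{21}=\frac{p\,[\,1+d_2(\dsE\eta+1)\,]}{d_2+1},
\end{equation*}
and the condition $\sqrt{M_{12}M_{21}}<1$ is exactly the threshold in the statement. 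With that correction, the rest of your outline (subcriticality of the multitype process implies a.s.\ extinction, hence extinction of the dominated frog model) goes through as in the paper.
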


The proofs of Theorems~\ref{T: SCE} and \ref{T: LB} are given in Section~\ref{S: Extinction}.
To present an upper bound for $\pc{\bbT_{\dd}, \eta}$, we need the following definition.

\begin{defn} 
\label{D: tudo}
Let $q=1-\rho_0 > 0$, 
$\kappa=(d_1+1)(d_2+1)$, 
$\Delta =\kappa^2-2\kappa(d_1+d_2) p^2+(d_2-d_1)^2 p^4$. 
We also define the functions $\alpha, \beta, f:[0, 1] \rightarrow [0, 1]$ given by
{\allowdisplaybreaks
\begin{align}
\alpha(p)&=\alpha^{(\dd)}(p)=\left\{
\begin{array}{cl}
\dfrac{\kappa+p^2(d_2-d_1)-\sqrt{\Delta}}{2d_2(d_1+1)p} & \text{if } 0 < p \leq 1, \\[0.5cm]
0 & \text{if } p = 0, 
\end{array}	\right. \label{F: Prob-alpha}\\[0.2cm]
\beta(p)&=\beta^{(\dd)}(p)=\left\{
\begin{array}{cl}
\dfrac{\kappa+p^2(d_1-d_2)-\sqrt{\Delta}}{2d_1(d_2+1)p} & \text{if } 0 < p \leq 1, \\[0.5cm]
0 & \text{if } p = 0, 
\end{array}	\right. \label{F: Prob-beta}\\[0.25cm]
f^{(\dd, q)}(p)&=\alpha(p)\beta(p)[1+q(1-\alpha(p))][1+q(1-\beta(p))]-\frac{1}{d_1d_2}.
\label{F: Function-f}
\end{align}}%
\end{defn}

\begin{teo} 
\label{T: UBA}
Suppose that $\rho_0 < 1$. 
If $d_1\geq 2$ or $d_2 \geq 2$, then
\begin{equation*}
\pc{\bbT_{\dd}, \eta}\leq \tilde{p}(\dd, q), 
\end{equation*} 
where $\tilde{p}(\dd, q)$ is the unique root of the function $f^{(\dd, q)}$ in the interval $(0, 1)$.
\end{teo}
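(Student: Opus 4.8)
The plan is to realise $\alpha$ and $\beta$ as hitting probabilities of the killed random walk, to embed a two–type branching process inside $\FM{\bbT_{\dd}}{p,\eta}$ whose survival forces the frog model to survive, and then to read off supercriticality as the inequality $f(p)>0$. First I would identify $\alpha(p)$ (resp.\ $\beta(p)$) with the probability that a single frog started at a type~$1$ (resp.\ type~$2$) vertex ever reaches a prescribed neighbour, where a ``frog'' is a SRW on $\bbT_{\dd}$ killed with probability $1-p$ before each step. Decomposing on the first step and using that on a tree the walk must revisit the common vertex to change branches, one obtains the system $\alpha=\tfrac{p}{d_1+1}+\tfrac{pd_1}{d_1+1}\alpha\beta$ and $\beta=\tfrac{p}{d_2+1}+\tfrac{pd_2}{d_2+1}\alpha\beta$. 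Writing $w=\alpha\beta$ collapses this to $p^2d_1d_2\,w^2+[p^2(d_1+d_2)-\kappa]\,w+p^2=0$, whose discriminant is exactly $\Delta$; taking the smaller root and substituting back recovers the closed forms \eqref{F: Prob-alpha}--\eqref{F: Prob-beta}. In particular $\alpha$ and $\beta$ are nondecreasing in $p$, and $d_1d_2\,\alpha(1)\beta(1)=1$, since at $p=1$ the quadratic has roots $1/(d_1d_2)$ and $1$ and the smaller is selected (here $d_1d_2>1$ is where $d_1\ge2$ or $d_2\ge2$ is used).

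Next I would construct a two–type Galton--Watson process dominated by the frog model. A type~$1$ individual is a frog made available at a previously unvisited type~$1$ vertex, and a type~$2$ individual is the analogue at a type~$2$ vertex; the exploration is organised to move only into fresh subtrees, which the tree structure keeps pairwise disjoint, so distinct reproductions use independent randomness. From a type~$1$ frog at a vertex $x$ and a fixed child $y$ (there are $d_1$ of them) the offspring placed at $y$ are the incoming explorer, which reaches $y$ with probability $\alpha$, together with the frog native to $y$, present with probability $q=1-\rho_0$ and acting as a genuinely independent explorer; the bookkeeping that prevents the two explorers from being double--counted supplies the factor $(1-\alpha)$, so the mean number of type~$2$ offspring at $y$ is $\alpha[1+q(1-\alpha)]$. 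Summing over children and arguing symmetrically for type~$2$ frogs gives a mean matrix with off–diagonal entries $m_{12}=d_1\,\alpha[1+q(1-\alpha)]$ and $m_{21}=d_2\,\beta[1+q(1-\beta)]$.

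This process is irreducible of period $2$, hence supercritical exactly when $m_{12}m_{21}>1$, i.e.\ when $d_1d_2\,\alpha\beta[1+q(1-\alpha)][1+q(1-\beta)]>1$, which is precisely $f^{(\dd,q)}(p)>0$. A supercritical, positively regular multitype branching process survives with positive probability, and by the coupling it is dominated by $\FM{\bbT_{\dd}}{p,\eta}$, so the frog model survives with positive probability whenever $f(p)>0$; thus $\pc{\bbT_{\dd},\eta}\le\inf\{p:f(p)>0\}$. It then remains to analyse $f$, which is continuous on $(0,1)$ with $f(0^+)=-1/(d_1d_2)<0$. Using $d_1d_2\,\alpha(1)\beta(1)=1$ together with $q>0$ and $\alpha(1),\beta(1)<1$ gives $f(1)=\tfrac{1}{d_1d_2}\{[1+q(1-\alpha(1))][1+q(1-\beta(1))]-1\}>0$. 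Finally, since $t\mapsto t[1+q(1-t)]$ is nondecreasing on $[0,1]$ (its derivative $1+q(1-2t)\ge1-q\ge0$) and $\alpha,\beta$ are nondecreasing in $p$, the product $\{\alpha[1+q(1-\alpha)]\}\{\beta[1+q(1-\beta)]\}$ is nondecreasing, so $f$ has a unique zero $\tilde p(\dd,q)\in(0,1)$ with $f(p)>0\iff p>\tilde p$, whence $\pc{\bbT_{\dd},\eta}\le\tilde p(\dd,q)$.

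The main obstacle is the branching--process construction itself: one must couple an honest two–type Galton--Watson process below $\FM{\bbT_{\dd}}{p,\eta}$, simultaneously tracking the incoming explorer and the native frog at each newly colonised vertex while keeping the reproduction events independent and free of double counting. This is exactly where the corrections $(1-\alpha)$ and $(1-\beta)$ arise, and it is the only step that is not a routine computation; the identification of $\alpha,\beta$ and the sign and monotonicity analysis of $f$ are essentially mechanical once the construction is in place.
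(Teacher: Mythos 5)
Your first step (the first--step analysis giving the quadratic for $\alpha\beta$ and the closed forms \eqref{F: Prob-alpha}--\eqref{F: Prob-beta}) and your last step (sign of $f^{(\dd,q)}$ at the endpoints, monotonicity, uniqueness of the root, and the equivalence $m_{12}m_{21}>1\iff f^{(\dd,q)}(p)>0$ for a period-two mean matrix) both match the paper. The gap is exactly the step you yourself flag as ``the main obstacle'': the two-type Galton--Watson process with mean matrix entries $m_{12}=d_1\alpha[1+q(1-\alpha)]$, $m_{21}=d_2\beta[1+q(1-\beta)]$, dominated by $\FM{\bbT_{\dd}}{p,\eta}$, is never constructed --- and no edge-by-edge construction of this kind can work. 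The factor $[1+q(1-\alpha)]$ is really a \emph{relay} probability along a fixed path: given that an explorer has arrived at a type~$1$ vertex $x$, the probability that activation is carried one step further along the path either by that explorer (probability $\alpha$) or, on its failure, by the frog native to $x$ (probability $(1-\alpha)q\alpha$) equals $\alpha[1+q(1-\alpha)]$. So the $(1-\alpha)$ is the disjointness correction for the two frogs \emph{at} $x$, not a bookkeeping device involving the frog native to $y$, as you describe. The real difficulty is that in a one-edge-at-a-time branching process these same two frogs (incoming explorer plus native frog at $x$) must generate the offspring at \emph{all} $d_1$ children of $x$ simultaneously. A single frog can visit several children, so your counting clones it into several supposedly independent individuals; this destroys both the independence of reproduction required of a Galton--Watson process (broods of siblings share one walk, and the parent's brood shares the explorer's walk with the child's brood) and the domination itself, since survival of the overcounted process no longer implies survival of the frog model. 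Any honest repair that assigns each frog to at most one child (say, the first child it visits) replaces $d_1\alpha$ by $\dsP[\text{the explorer visits some child}]$, which is strictly smaller than $d_1\alpha$ whenever $d_i\geq 2$ and $p>0$, so the claimed mean matrix is not attainable by a genuine dominated process, and with dependence present a supercritical ``mean matrix'' alone does not yield positive survival probability.

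This is precisely why the paper does not use a fixed two-type process. Instead (Lemma~\ref{P: BPdom}) it builds, for each $n$, a single-type Galton--Watson process whose individuals are vertices $2n$ levels apart, linked by the path-relay events $[\p{x}{y}{o}]$ of Definition~\ref{D: k Cam}: reproduction of distinct individuals then uses frogs native to disjoint subtrees, so the process is a true branching process, while all sharing of frogs is confined to a single brood, where only the mean matters (linearity of expectation over the $(d_1d_2)^n$ targets). The price is a boundary loss at the ends of each $2n$-path: the mean is bounded below by $(d_1d_2)^n\phi_n^{(\dd,q)}(p)$ with $\phi_n^{(\dd,q)}(p)=q[\alpha\beta(1+q(1-\beta))]^n[1+q(1-\alpha)]^{n-1}$, whose $n$-th root converges to $\alpha\beta[1+q(1-\alpha)][1+q(1-\beta)]$ only as $n\to\infty$; Theorem~\ref{T: UBA} then follows by passing the roots of $f_n^{(\dd,q)}$ to the root of $f^{(\dd,q)}$ via Lemma~\ref{L: Conv}. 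To close your argument you would need either this limiting scheme or a genuinely new coupling that realizes the relay factors in a single-generation process; as written, the central object of your proof does not exist.
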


\begin{cor}
\label{C: UB bi-re}
Suppose that $\eta\equiv 1$. If $d_1 \geq 2$ or $d_2 \geq 2$, then 
\label{C: UBH}
\begin{equation*}
\pc{\bbT_{\dd}} \leq \frac{1}{2}\sqrt{\frac{(d_1 + 1)(d_2 + 1)}{d_1 d_2}}.
\end{equation*}
\end{cor}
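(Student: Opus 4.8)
The plan is to read the corollary off from Theorem~\ref{T: UBA}. For $\eta \equiv 1$ we have $\rho_0 = 0$, hence $q = 1$, and the bracketed factors in \eqref{F: Function-f} become $2 - \alpha(p)$ and $2 - \beta(p)$, so that
\[
f^{(\dd, 1)}(p) = \alpha(p)\,\beta(p)\,[2 - \alpha(p)]\,[2 - \beta(p)] - \frac{1}{d_1 d_2}.
\]
Set $p^\ast := \tfrac12\sqrt{\kappa/(d_1 d_2)}$. The hypothesis $d_1 \ge 2$ or $d_2 \ge 2$ is exactly what makes $\kappa = (d_1+1)(d_2+1) < 4 d_1 d_2$, i.e.\ $p^\ast \in (0,1)$. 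Since $\alpha(0) = \beta(0) = 0$, the formula gives $f^{(\dd,1)}(0) = -1/(d_1 d_2) < 0$; as $\tilde p(\dd,1)$ is the unique zero of $f^{(\dd,1)}$ in $(0,1)$, continuity forces $f^{(\dd,1)} < 0$ on $(0, \tilde p)$ and $f^{(\dd,1)} > 0$ on $(\tilde p, 1)$. Consequently it suffices to prove $f^{(\dd,1)}(p^\ast) \ge 0$, since then $p^\ast \ge \tilde p(\dd,1) \ge \pc{\bbT_{\dd}}$, which is the assertion.

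The core of the proof is the evaluation of $\alpha$ and $\beta$ at $p^\ast$, where the defining relation $p^{\ast 2} = \kappa/(4 d_1 d_2)$ (equivalently $\kappa = 4 d_1 d_2\,p^{\ast 2}$) is tailored to collapse the radicals in \eqref{F: Prob-alpha}--\eqref{F: Prob-beta}. Writing $W := 4 d_1 d_2 - d_1 - d_2$ and $R := \sqrt{W^2 - 4 d_1 d_2}$, a direct substitution into $\Delta$ gives $\sqrt{\Delta(p^\ast)} = p^{\ast 2} R$, and carrying this through the two quotients the symmetric combinations telescope, yielding the clean identity
\[
\alpha(p^\ast)\,\beta(p^\ast) = \frac{W - R}{2 d_1 d_2}.
\]
Writing $Z := W - R > 0$, the target $f^{(\dd,1)}(p^\ast) \ge 0$ then reduces to the single scalar inequality $[2 - \alpha(p^\ast)][2 - \beta(p^\ast)] \ge 2/Z$.

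To close this I would expand $[2-\alpha(p^\ast)][2-\beta(p^\ast)] = 4 + \alpha(p^\ast)\beta(p^\ast) - 2[\alpha(p^\ast) + \beta(p^\ast)]$ using the companion identity $\alpha(p^\ast) + \beta(p^\ast) = S/(4 d_1 d_2 \sqrt{d_1 d_2\, \kappa})$, with $S = Z(2 d_1 d_2 + d_1 + d_2) + 2 d_1 d_2(d_1 + d_2 + 2)$, together with the quadratic relation $Z^2 = 2 W Z - 4 d_1 d_2$ to keep everything linear in $Z$. This recasts the target as a linear inequality in $Z$ whose coefficients involve the second surd $\sqrt{d_1 d_2\, \kappa}$; the elementary bound $2\sqrt{d_1 d_2\, \kappa} \le 2 d_1 d_2 + d_1 + d_2$, equivalent to $(d_1 - d_2)^2 \ge 0$, is what should drive the estimate, with equality exactly when $d_1 = d_2$ (recovering the sharp homogeneous value $\pc{\bbT_d} \le (d+1)/(2d)$).

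The principal obstacle is precisely this final estimate. After the simplifications above the inequality still carries two independent surds, $R = \sqrt{W^2 - 4 d_1 d_2}$ and $\sqrt{d_1 d_2\, \kappa}$, and both the coefficient of $Z$ and the constant term degenerate to $0$ along the diagonal $d_1 = d_2$; hence a naive sign comparison is inconclusive, and one must control the rates at which these quantities vanish off the diagonal. I would resolve this either by isolating and squaring the two surds in succession, reducing the claim to a polynomial inequality in $d_1, d_2$ that visibly factors through $(d_1 - d_2)^2$, or by fixing the product $d_1 d_2$ and arguing that the relevant expression is monotone in $(d_1 - d_2)^2$; the former route is more mechanical, the latter cleaner if the monotonicity can be verified directly.
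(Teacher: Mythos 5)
Your reduction is exactly the paper's: for $\eta\equiv 1$ one has $q=1$, the candidate bound $p^\ast=\frac{1}{2}\sqrt{\kappa/(d_1d_2)}$ lies in $(0,1)$ precisely when $d_1\geq 2$ or $d_2\geq 2$, and since $f^{(\dd,1)}(0)=-1/(d_1d_2)<0$ while $\tilde p(\dd,1)$ is the unique root in $(0,1)$, it suffices to check $f^{(\dd,1)}(p^\ast)\geq 0$. Your intermediate identities are also correct: $\sqrt{\Delta(p^\ast)}=p^{\ast 2}R$, $\alpha(p^\ast)\beta(p^\ast)=(W-R)/(2d_1d_2)$, and $\alpha(p^\ast)+\beta(p^\ast)=S/\bigl(4d_1d_2\sqrt{d_1d_2\kappa}\bigr)$ with your $S$; I verified all three. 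The genuine gap is that the argument stops at the only step that carries the content of the corollary: the inequality $[2-\alpha(p^\ast)][2-\beta(p^\ast)]\geq 2/Z$ is never proved; two possible strategies are sketched, neither executed. And the difficulty you flag is real, not a formality. Writing $m=d_1d_2$, $s=d_1+d_2$, $G=\sqrt{m\kappa}$, and eliminating $Z^2$ via $Z^2=2WZ-4m$, the target becomes $ZA\geq B$ with $A=(16m-2s)G-16m^2-6ms+2s^2-4m$ and $B=4m(2G-2m-s)$. Your driving inequality $2G\leq 2m+s$ (equivalent to $(d_1-d_2)^2\geq 0$) gives $B\leq 0$, but it says nothing about $A$: one computes $A=52\sqrt{3}-90>0$ at $(d_1,d_2)=(1,2)$ but $A\approx -1905<0$ at $(1,100)$, so the coefficient of $Z$ changes sign inside the admissible region, and both $A$ and $B$ vanish on the diagonal. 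Hence the direct argument ``$Z>0$ and $A\geq 0\geq B$'' fails in general; in the regime $A<0$ one needs an additional quantitative upper bound on $Z$ (for instance $Z=4m/(W+R)\leq 4m/W$) before any squaring, and none of this is carried out.

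For perspective, the paper does not do this algebra by hand either: it closes precisely this step by symbolic computation, verifying $f^{(\dd)}(\bar p)\geq 0$ with Mathematica's Reduce command over the stated range of $(d_1,d_2)$. So your setup is fully compatible with the paper's proof, and your identities genuinely reduce the problem to an explicit two-surd inequality in $(d_1,d_2)$, which is progress toward a human-readable argument; but as written the proposal establishes nothing beyond what Theorem~\ref{T: UBA} already gives, since the corollary's entire content is the inequality you leave open. To complete it, either carry out the case split on the sign of $A$ (using $Z\leq 4m/W$ when $A<0$) followed by careful successive squarings, or fall back, as the paper does, on a computer-assisted verification of the resulting polynomial inequality.
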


The proofs of Theorem~\ref{T: UBA} and Corollary~\ref{C: UB bi-re} are presented in Section~\ref{S: Survival}.

\begin{obs}
In the case of $d_1=d_2=d$ and $\eta \equiv 1$ in Theorem~\ref{T: UBA} (also in Corollary~\ref{C: UB bi-re}), we obtain the upper bound for the critical probability $\pc{\bbT_d}$ that is proved in \citet[Section 4]{IUB}, namely, $(d+1)/(2d)$. 
\end{obs}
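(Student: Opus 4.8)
The plan is to derive the corollary directly from Theorem~\ref{T: UBA}. With $\eta\equiv1$ we have $\rho_0=0$, hence $q=1$, and the function of Definition~\ref{D: tudo} specializes to $f^{(\dd,1)}(p)=\alpha(p)\beta(p)\,(2-\alpha(p))(2-\beta(p))-1/(d_1d_2)$. By Theorem~\ref{T: UBA}, $\pc{\bbT_{\dd}}\le\tilde p$, where $\tilde p$ is the unique root of $f^{(\dd,1)}$ in $(0,1)$, so everything reduces to proving $\tilde p\le p^{*}$ with $p^{*}=\tfrac12\sqrt{(d_1+1)(d_2+1)/(d_1d_2)}$. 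Since $\alpha(0)=\beta(0)=0$ we have $f^{(\dd,1)}(0)=-1/(d_1d_2)<0$; continuity together with the uniqueness of the root then forces $f^{(\dd,1)}<0$ on $(0,\tilde p)$, because any nonnegative value there would produce, via the intermediate value theorem, a second root. Consequently it suffices to check that $p^{*}\in(0,1)$ and that $f^{(\dd,1)}(p^{*})\ge0$, for then $p^{*}$ cannot lie in $(0,\tilde p)$. The membership $p^{*}\in(0,1)$ amounts to the elementary inequality $(d_1+1)(d_2+1)<4d_1d_2$, which holds whenever $d_1\ge2$ or $d_2\ge2$.

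The core is the inequality $f^{(\dd,1)}(p^{*})\ge0$, and the device that makes it tractable is to view $\alpha,\beta$ as roots of quadratics. A direct computation of discriminants shows that $\alpha(p)$ and $\beta(p)$ are the \emph{smaller} roots of $d_2(d_1+1)p\,x^2-[\kappa+p^2(d_2-d_1)]x+(d_2+1)p=0$ and $d_1(d_2+1)p\,y^2-[\kappa+p^2(d_1-d_2)]y+(d_1+1)p=0$, respectively. Writing $\alpha',\beta'$ for the larger roots (the same formulas with $-\sqrt{\Delta}$ replaced by $+\sqrt{\Delta}$), the products of roots give $\alpha\alpha'=\tfrac{d_2+1}{d_2(d_1+1)}$ and $\beta\beta'=\tfrac{d_1+1}{d_1(d_2+1)}$, whence the key identity $\alpha\beta\,\alpha'\beta'=1/(d_1d_2)$. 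Dividing by $\alpha\beta>0$, the target $\alpha\beta(2-\alpha)(2-\beta)\ge1/(d_1d_2)$ becomes equivalent to $(2-\alpha)(2-\beta)\ge\alpha'\beta'$. A short computation gives $\alpha\beta-\alpha'\beta'=-\sqrt{\Delta}/(d_1d_2p^2)$, so this is in turn equivalent to the single linear-in-$\sqrt{\Delta}$ condition $\alpha+\beta\le 2-\sqrt{\Delta}/(2d_1d_2p^2)$.

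It remains to evaluate this at $p=p^{*}$, where $(p^{*})^{2}=\kappa/(4d_1d_2)$. Substituting the closed forms for $\alpha+\beta$ and for $\sqrt{\Delta}$ and clearing the (positive) common denominator, the inequality reduces, after the factorization described below, to $(A-B)\bigl(2d_1d_2-(d_1+d_2)-2\sqrt{d_1d_2\,\kappa}\bigr)\le(A-B)\sqrt{N}$, where $A=2d_1d_2+d_1+d_2$, $B=2\sqrt{d_1d_2\,\kappa}$, and $N=16d_1^2d_2^2-8d_1d_2(d_1+d_2)+(d_1-d_2)^2\ge0$. The relation $A^2-B^2=(d_1-d_2)^2\ge0$ shows $A\ge B\ge0$, so the factor $A-B$ is nonnegative, while $2\sqrt{d_1d_2\,\kappa}\ge2d_1d_2\ge2d_1d_2-(d_1+d_2)$ gives $2d_1d_2-(d_1+d_2)-2\sqrt{d_1d_2\,\kappa}\le0\le\sqrt{N}$. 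Hence the inequality holds, establishing $f^{(\dd,1)}(p^{*})\ge0$, with equality precisely when $d_1=d_2$; in that case $p^{*}=(d+1)/(2d)$, recovering the homogeneous bound and matching the final Remark.

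The main obstacle is the algebraic reduction of the last paragraph. The expression for $\alpha+\beta$ carries an \emph{odd} power of $p$ alongside $\sqrt{\Delta}$, so after inserting $p^{*}$ one confronts a mixture of $\sqrt{\kappa}$ and $\sqrt{N}$ that does not visibly collapse. The point that dissolves this is the nonobvious factorization $L=(A-B)(4d_1d_2-A-B)$ of the left-hand side, obtained by eliminating $(d_1+d_2)^2$ through $A^2-B^2=(d_1-d_2)^2$; once this factor is isolated the common $(A-B)$ cancels the troublesome radicals and the inequality becomes transparent, with no squaring required. I expect no difficulty in the reduction of $f^{(\dd,1)}\ge0$ to a condition linear in $\sqrt{\Delta}$, since that is forced by the identity $\alpha\beta\,\alpha'\beta'=1/(d_1d_2)$; the care lies entirely in the bookkeeping that produces this factorization.
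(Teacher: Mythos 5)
Your proof is correct, and it takes a genuinely different route from the paper's. The paper proves Corollary~\ref{C: UB bi-re} by verifying $f^{(\dd)}(\bar{p})\geq 0$ with a symbolic computation (Mathematica's \texttt{Reduce}), invoking monotonicity of $f^{(\dd)}$ to conclude $\tilde{p}\leq\bar{p}$; the remark itself (that Theorem~\ref{T: UBA}'s bound \emph{equals} $(d+1)/(2d)$ when $d_1=d_2=d$) is then simply asserted, with no written argument. You replace the computer check by a self-contained algebraic one: recognizing $\alpha,\beta$ as the smaller roots of the quadratics arising from the hitting-time system in the proof of Lemma~\ref{L: Prob open edge}, Vieta's formulas give the identity $\alpha\beta\alpha'\beta'=1/(d_1d_2)$, which turns $f^{(\dd)}(p^{*})\geq 0$ into $(2-\alpha)(2-\beta)\geq\alpha'\beta'$, then into a condition linear in $\sqrt{\Delta}$, and finally, via $(d_1-d_2)^2=(A-B)(A+B)$, into an inequality whose two sides visibly have opposite signs. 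You also sidestep the monotonicity of $f^{(\dd)}$ (which the paper uses but does not prove) by arguing from uniqueness of the root plus the intermediate value theorem. What your approach buys is substantial: it is human-verifiable, it explains structurally why the threshold $1/(d_1d_2)$ is attainable, and above all it isolates the equality case $f^{(\dd)}(p^{*})=0\iff d_1=d_2$, which is precisely the content of the remark --- namely that the root $\tilde{p}$ itself, not just an upper bound for it, is $(d+1)/(2d)$ in the homogeneous case. Two points you should make explicit to be airtight: first, that $\alpha,\beta$ satisfy the stated quadratics follows from the system displayed in the proof of Lemma~\ref{L: Prob open edge} (or by direct substitution of \eqref{F: Prob-alpha}--\eqref{F: Prob-beta}); second, $N\geq 0$ is not free --- it follows either from $N=16d_1^2d_2^2\,\Delta(p^{*})/\kappa^2$ together with $\Delta\geq 0$ on $[0,1]$, or directly from $N=(4d_1d_2-d_1-d_2)^2-4d_1d_2$ and the elementary bound $4d_1d_2-d_1-d_2\geq 2\sqrt{d_1d_2}$. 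Neither is a gap, only bookkeeping.
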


From Theorems~\ref{T: SCE} and \ref{T: UBA}, we conclude:

\begin{teo}
\label{T: PT}
Suppose that $\rho_0 < 1$ and $\dsE\eta^\delta<\infty$ for some $\delta > 0$. 
If $d_1\geq 2$ or $d_2 \geq 2$, then
\[ 0 < \pc{\bbT_{\dd}, \eta} < 1. \]
\end{teo}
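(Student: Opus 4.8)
The plan is to obtain Theorem~\ref{T: PT} immediately by pairing the lower bound of Theorem~\ref{T: SCE} with the upper bound of Theorem~\ref{T: UBA}. Since both bounds have already been established, the task reduces to checking that the hypotheses of each theorem are satisfied under the standing assumptions and that the upper bound furnished is strictly below~$1$. No new probabilistic input is needed; this is a bookkeeping argument.

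First I would verify strict positivity, $\pc{\bbT_{\dd}, \eta} > 0$. The biregular tree $\bbT_{\dd}$ is an infinite tree in which every vertex has degree $d_1 + 1$ or $d_2 + 1$, so it is an infinite tree of bounded degree. Under the hypothesis that $\dsE\eta^\delta < \infty$ for some $\delta > 0$, Theorem~\ref{T: SCE} applies verbatim with $\GG = \bbT_{\dd}$ and yields $\pc{\bbT_{\dd}, \eta} > 0$. Note that the second hypothesis of Theorem~\ref{T: PT}, namely $\rho_0 < 1$, is not needed for this half.

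Next I would verify $\pc{\bbT_{\dd}, \eta} < 1$. The assumption $\rho_0 < 1$ guarantees $q = 1 - \rho_0 > 0$, and together with the condition $d_1 \geq 2$ or $d_2 \geq 2$ this places us exactly under the hypotheses of Theorem~\ref{T: UBA}. That theorem gives $\pc{\bbT_{\dd}, \eta} \leq \tilde{p}(\dd, q)$, where $\tilde{p}(\dd, q)$ is, by its defining property, the unique root of $f^{(\dd, q)}$ lying in the \emph{open} interval $(0, 1)$. In particular $\tilde{p}(\dd, q) < 1$ strictly. Chaining the two bounds produces $0 < \pc{\bbT_{\dd}, \eta} \leq \tilde{p}(\dd, q) < 1$, which is the assertion.

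Because the two component results carry all the analytic and probabilistic content, there is no genuine obstacle at this stage. The only point deserving a moment of care is that the strict upper bound $\tilde{p}(\dd, q) < 1$ hinges on the root of $f^{(\dd, q)}$ being located in the open interval, rather than possibly coinciding with the endpoint~$1$; this separation is precisely what Theorem~\ref{T: UBA} asserts, so it may be invoked directly. Everything else is an immediate conjunction of the two inequalities.
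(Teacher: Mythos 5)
Your proposal is correct and matches the paper exactly: the paper derives Theorem~\ref{T: PT} as an immediate consequence of Theorems~\ref{T: SCE} and \ref{T: UBA}, just as you do, with the positivity coming from Theorem~\ref{T: SCE} (using only $\dsE\eta^\delta<\infty$) and the strict upper bound from the root $\tilde{p}(\dd,q)\in(0,1)$ in Theorem~\ref{T: UBA} (using $\rho_0<1$). Your observation about which hypothesis feeds which half is accurate and consistent with the paper's presentation.
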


For comparison, we include a result proved by \citet{PT}, that establishes a lower bound for the critical probability of the frog model on bounded degree graphs.
This result also appears in \citet[Lemma~1]{Mono}.

\begin{prop}[\citet{PT}]
\label{P: LB Alves}
Suppose that $\GG$ is a graph with maximum degree $(D + 1)$, and $\dsE\eta<\infty$.
Then, 
\begin{equation*}
\pc{\GG, \eta} \geq \dfrac{D+1}{D(\dsE\eta+1)+1}.
\end{equation*} 
\end{prop}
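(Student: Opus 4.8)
The plan is to control the number $Z_n$ of frogs that are awake at time $n$ and to show that when $p$ lies below the stated threshold this quantity tends to zero in mean, which forces almost sure extinction. Since a realization survives precisely when $Z_n\ge 1$ for every $n$, Markov's inequality gives $\dsP[\FM{\GG}{p,\eta}\text{ survives}]\le \dsP[Z_n\ge 1]\le \dsE Z_n$ for each $n$, so it suffices to prove that $\dsE Z_n\to 0$. This reduces the whole problem to a multiplicative one-step estimate for the conditional mean of $Z_{n+1}$.

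First I would establish a one-step inequality $\dsE[Z_{n+1}\mid\mathcal{F}_n]\le m\,Z_n$ for $n\ge 1$, where $\mathcal{F}_n$ records the entire configuration up to time $n$ and $m=p\bigl(1+\tfrac{D\,\dsE\eta}{D+1}\bigr)$. Write $Z_{n+1}$ as the number of time-$n$ frogs that survive their $(n+1)$-th step plus the number of frogs newly awakened at time $n+1$. By memorylessness of the geometric lifetime, each awake frog survives the next step with probability $p$ independently, contributing $p\,Z_n$ to the conditional mean of the first term. For the second term, a surviving frog awakens others only if it lands on a vertex not visited before, and here is the geometric input: every frog awake at a time $n\ge 1$ sits at a vertex $u$ with at least one already-visited neighbour — either the vertex it occupied one step earlier, or, if it was just awakened, the vertex from which its awakening frog arrived. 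Since the walk steps to a uniform neighbour of $u$, the probability of landing on a fresh vertex is at most $1-1/\deg(u)\le D/(D+1)$. As $\{\eta(x)\}$ is independent of the walks and the survival clocks, a fresh landing site releases $\dsE\eta$ frogs in expectation, so the conditional mean of the second term is at most $p\,\tfrac{D}{D+1}\,\dsE\eta\,Z_n$; adding the two contributions yields the claimed inequality.

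The frogs initially placed at $\raiz$ must be handled as a boundary term, since on their very first step they need not face an already-visited neighbour; this affects only the passage from $Z_0$ to $Z_1$ and gives $\dsE[Z_1\mid\mathcal{F}_0]\le p(1+\dsE\eta)Z_0$, with $\dsE Z_0=\dsE\eta<\infty$ by hypothesis. Iterating the one-step bound produces $\dsE Z_n\le p(1+\dsE\eta)\,\dsE\eta\;m^{\,n-1}$, and since $m<1$ is exactly equivalent to $p<(D+1)/\bigl(D(\dsE\eta+1)+1\bigr)$, we conclude $\dsE Z_n\to 0$ and hence $\pc{\GG,\eta}\ge (D+1)/\bigl(D(\dsE\eta+1)+1\bigr)$. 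The main obstacle is the one-step inequality, and within it the uniform estimate $\dsP[\text{land on a fresh vertex}]\le D/(D+1)$: the delicate point is to argue that every awake frog — including those awakened during the evolution — always has a previously visited neighbour to step back onto, which is exactly what sharpens the bare degree bound into the stated constant; checking the independence that lets each fresh site contribute precisely $\dsE\eta$ awakened frogs is the other point needing care.
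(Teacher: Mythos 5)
Your proof is correct. Note first that the paper contains no proof of Proposition~\ref{P: LB Alves}: it is quoted from \citet{PT} (see also Lemma~1 of \citet{Mono}), so the natural benchmark is the method the paper attributes to \citet{PT} and then adapts, in multitype form, to prove Theorem~\ref{T: LB} --- namely, domination of the frog model by a Galton--Watson branching process. Measured against that, your argument is the same comparison carried out at the level of first moments: your decomposition of $Z_{n+1}$ into survivors plus newly awakened frogs, the observation that every frog awake at a time $n \geq 1$ has a previously visited neighbour (so a fresh landing has probability at most $D/(D+1)$), and the accounting of $\dsE\eta$ expected new frogs per fresh site are exactly the ingredients of the dominating branching process; your constant $m = p\bigl(1+\tfrac{D\,\dsE\eta}{D+1}\bigr) = \tfrac{p\,[\,1+D(\dsE\eta+1)\,]}{D+1}$ is precisely its mean offspring number, and the recursion $\dsE Z_n \leq m^{n-1}\,\dsE Z_1$ followed by Markov's inequality is the standard proof that a subcritical branching process dies out, inlined rather than cited. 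What your route buys is self-containedness: no coupling with an auxiliary process has to be constructed, and the frogs at $\raiz$ are handled cleanly as a one-step boundary term. What the domination route buys is modularity and a template that generalizes --- it is exactly this construction that the paper upgrades to a two-type process to obtain the sharper bound of Theorem~\ref{T: LB} on $\bbT_{\dd}$, where your scalar $m$ would be replaced by the spectral radius of the mean matrix. The two points you flag as delicate are the right ones, and both go through: the visited-neighbour property holds for every awake frog at positive times (either its own position one step earlier, or the vertex from which its awakener jumped), and for a still-unvisited vertex $y$ the variable $\eta(y)$ is independent of the history of the process up to the present time, because on the event that $y$ is unvisited that history is a function only of the randomness attached to other vertices; hence each fresh landing site contributes exactly $\dsE\eta$ frogs in conditional expectation, and over-counting when several frogs land on the same fresh vertex only helps the upper bound.
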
 

Notice that, for $\GG=\bbT_{\dd}$ with $d_1=d_2$, the lower bound given in Theorem~\ref{T: LB} equals the one provided by Proposition~\ref{P: LB Alves}. 
However, for $d_1 \neq d_2$, the lower bound stated in Theorem~\ref{T: LB} is better than that of Proposition~\ref{P: LB Alves}.
For a numerical illustration, we consider $\eta \equiv 1$, and compute the bounds established in Theorems~{\ref{T: LB}, \ref{T: UBA}} and Proposition~\ref{P: LB Alves} for some values of the pair $(d_1, d_2)$. 
The resulting values are given in Table~\ref{NV}.

\begin{table}[ht]
\centering
\begin{tabular}{P{\raggedright}{1.8cm}P{\centering}{4cm}P{\centering}{4cm}P{\centering}{3.6cm}} 
\toprule
$(d_1, d_2)$ & LB Proposition \ref{P: LB Alves} &LB Theorem~\ref{T: LB} & UB Theorem~\ref{T: UBA} \tabularnewline
\midrule
(1, 2) & 0.6000 & 0.6325 & 0.8588 \tabularnewline
(1, 3) & 0.5714 & 0.6172 & 0.8039 \tabularnewline
(1, 4) & 0.5556 & 0.6086 & 0.7749 \tabularnewline
(2, 2) & 0.6000 & 0.6000 & 0.7500 \tabularnewline
(2, 3) & 0.5714 & 0.5855 & 0.7063 \tabularnewline
(2, 4) & 0.5556 & 0.5774 & 0.6828 \tabularnewline
(3, 100) & 0.5025 & 0.5359 & 0.5771 \tabularnewline
(3, 1000) & 0.5002 & 0.5347 & 0.5743 \tabularnewline
(4, 10000) & 0.5000 & 0.5271 & 0.5572 \tabularnewline
\bottomrule
\end{tabular}
\caption{Numerical values of the bounds on the critical probability $\pc{\bbT_{\dd}}$.}
\label{NV}
\end{table}

Now we present a result concerning the asymptotic behavior of $\pc{\bbT_{\dd}}$ as $d_1, d_2 \to \infty$. 
The lower and upper bounds given in Theorem~\ref{T: LB} with $\eta\equiv 1$ and Corollary~\ref{C: UB bi-re} are, respectively,
\[ \frac{1}{2} + \frac{1}{8} \left(\frac{1}{d_1}+\frac{1}{d_2}\right) + O\left(\frac{1}{d_1^2}+\frac{1}{d_2^2}\right)
\text{ and } \;
\frac{1}{2} + \frac{1}{4} \left(\frac{1}{d_1}+\frac{1}{d_2}\right) + O\left(\frac{1}{d_1^2}+\frac{1}{d_2^2}\right). \]
As a consequence, we find the correct order of magnitude for the critical probability.

\begin{cor} For the frog model on $\bbT_{\dd}$ starting from the one-particle-per-vertex initial configuration, we have 
\[\pc{\bbT_{\dd}}= \frac{1}{2} +\Theta\left(\frac{1}{d_1}+\frac{1}{d_2}\right) \quad 
\text{as } \, d_1, d_2 \to \infty.\]
\end{cor}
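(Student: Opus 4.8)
The plan is to obtain the corollary as a squeeze of the difference $\pc{\bbT_{\dd}} - 1/2$ between the two explicit bounds already proved. For $d_1, d_2$ large enough that $d_1 \geq 2$ or $d_2 \geq 2$, Theorem~\ref{T: LB} with $\eta \equiv 1$ (so that $\dsE\eta = 1$) and Corollary~\ref{C: UB bi-re} give
\[
\sqrt{\frac{(d_1+1)(d_2+1)}{(2d_1+1)(2d_2+1)}} \;\leq\; \pc{\bbT_{\dd}} \;\leq\; \frac{1}{2}\sqrt{\frac{(d_1+1)(d_2+1)}{d_1 d_2}},
\]
so it suffices to show that each of these two bounds equals $1/2 + \Theta(1/d_1 + 1/d_2)$ as $d_1, d_2 \to \infty$.

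First I would carry out the Taylor expansions displayed just before the statement. Writing $x = 1/d_1$ and $y = 1/d_2$, each radicand is a product of factors of the form $(1 + ax)(1 + ay)$ and $(1 + bx)^{-1}(1 + by)^{-1}$ with $a, b \in \{1/2, 1\}$; taking logarithms, the log of each radicand is a linear combination of $x$ and $y$ plus a remainder of order $x^2 + y^2$. Exponentiating, halving, and collecting terms then yields the expansions
\[
\frac{1}{2} + \frac{1}{8}\left(\frac{1}{d_1} + \frac{1}{d_2}\right) + O\!\left(\frac{1}{d_1^2} + \frac{1}{d_2^2}\right)
\quad\text{and}\quad
\frac{1}{2} + \frac{1}{4}\left(\frac{1}{d_1} + \frac{1}{d_2}\right) + O\!\left(\frac{1}{d_1^2} + \frac{1}{d_2^2}\right)
\]
for the lower and upper bounds, respectively.

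Next I would verify that the quadratic remainder is negligible compared with the linear main term when \emph{both} parameters diverge. Setting $m = \min(d_1, d_2)$, one has $1/d_1^2 + 1/d_2^2 \leq 2/m^2$ while $1/d_1 + 1/d_2 \geq 1/m$, so the ratio of remainder to main term is at most $2/m \to 0$. Hence, for all sufficiently large $d_1, d_2$,
\[
\frac{1}{16}\left(\frac{1}{d_1} + \frac{1}{d_2}\right) \;\leq\; \pc{\bbT_{\dd}} - \frac{1}{2} \;\leq\; \frac{1}{3}\left(\frac{1}{d_1} + \frac{1}{d_2}\right),
\]
which is exactly the claim $\pc{\bbT_{\dd}} = 1/2 + \Theta(1/d_1 + 1/d_2)$.

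I do not expect a substantive obstacle: the corollary is a soft consequence of the two hard bounds established earlier, and the only delicate point is the two-variable nature of the asymptotics. One must read the $\Theta$ with both $d_1$ and $d_2$ tending to infinity so that the $O(1/d_1^2 + 1/d_2^2)$ error is genuinely dominated by $1/d_1 + 1/d_2$; were only one parameter to grow, this domination (and hence the stated order) would require separate justification.
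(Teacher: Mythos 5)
Your proposal is correct and follows exactly the paper's route: the corollary is obtained by squeezing $\pc{\bbT_{\dd}}$ between the lower bound of Theorem~\ref{T: LB} with $\eta \equiv 1$ and the upper bound of Corollary~\ref{C: UB bi-re}, whose expansions $\frac{1}{2} + \frac{1}{8}\bigl(\frac{1}{d_1}+\frac{1}{d_2}\bigr) + O\bigl(\frac{1}{d_1^2}+\frac{1}{d_2^2}\bigr)$ and $\frac{1}{2} + \frac{1}{4}\bigl(\frac{1}{d_1}+\frac{1}{d_2}\bigr) + O\bigl(\frac{1}{d_1^2}+\frac{1}{d_2^2}\bigr)$ are precisely those displayed in the paper just before the statement. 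Your additional verification that the quadratic remainder is dominated by the linear term when both parameters diverge is a worthwhile explicit justification of a point the paper leaves implicit, but it does not constitute a different approach.
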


\section{Survival of the process}
\label{S: Survival}

In brief, the key idea to prove Theorem~\ref{T: UBA} is to describe $\FM{\bbT_{\dd}}{p, \eta}$ as a percolation model which dominates suitably defined Galton--Watson branching processes.
In Subsections~\ref{SS: FM Perc} and \ref{SS: BT}, we describe the frog model on a graph $\GG$ as a particular percolation model, and we compute its parameters when $\GG = \bbT_{\dd}$.
Then, in Subsection~\ref{SS: SBP}, we define a sequence of Galton--Watson branching processes which are dominated by the frog model on $\bbT_{\dd}$.
Finding $p$ such that each one of these branching processes is supercritical leads us to obtain a sequence of upper bounds for the critical probability, which converges to the upper bound~$\tilde{p}(\dd, q)$ stated in~Theorem~\ref{T: UBA}.
Subsection~\ref{SS: Proofs} is devoted to the finalization of the proof.

The technique to prove Theorem~\ref{T: UBA} is similar to that used by \citet{IUB} to establish an upper bound for the critical probability of the frog model on homogeneous trees.
However, here we have to deal with the computations related to the fact that the degrees of the vertices of the tree alternate.
Also to overcome this issue, we construct the sequence of approximating branching processes in a new manner, leading directly to the desired upper bound.
An analogous approach is used by \citet{NUB} to derive an improved upper bound for the critical parameter on homogeneous trees.

\subsection{\texorpdfstring{$\FM{\GG}{p, \eta}$}{FM(Td1d2, p, h)} seen as percolation}
\label{SS: FM Perc}

In the bond percolation model, each edge of an infinite locally finite graph $\bar \GG=(\bar \VV, \bar \EE)$ is randomly assigned the value $1$ (open) or $0$ (closed), according to some probability measure on the product space $\{0, 1\}^{\bar \EE}$. 
Then, one studies the connectivity properties of the random subgraph of $\bar\GG$ which arises by removing closed edges. 
For a fundamental reference on the subject, we refer to~\citet{Grim}.
 
Next, we describe the frog model on an infinite graph $\GG = (\VV, \EE)$ as a particular bond percolation model. 
Indeed, for every $x \in \VV$ and $1 \leq k \leq \eta(x)$, we define the virtual set of vertices visited by the $k$-th frog located originally at $x$ by
\[ \mcR_x^k:=\{S_n^x(k): 0\leq n<\Xi_p^x(k)\}\subset \VV. \]
The set $\mcR_x^k$ becomes real in the case when $x$ is actually visited (and thus the sleeping frogs placed there are awoken). 
We define the range of $x$ by 
\begin{equation*}
\mcR_x:=
\left\{
\begin{array}{cl}
\bigcup_{k=1}^{\eta(x)}\mcR_x^k &\text{if } \eta(x) \geq 1, \\[0.2cm]
\{x\} &\text{otherwise}. 
\end{array}	\right.
\end{equation*}

Now let $\GGO=(\VV, \EEO)$ be the oriented graph with vertex-set $\VV$ and edge-set $\EEO:=\{\vxy:(x, y) \in \VV \times \VV, x \neq y\}$. 
That is, for every pair of distinct vertices $x$ and $y$, an oriented edge is drawn from $x$ to $y$.
Then, we introduce the following notations: for $x, y\in \VV$ distinct, $[\s{x}{y}]:=[y\in \mcR_x]$, $[\ns{x}{y}]:=[y \notin \mcR_x]$. 

This defines an oriented dependent long range anisotropic percolation model on~$\GGO$: $\vxy$ is declared to be open if $[\s{x}{y}]$, and closed otherwise.
In general, the probability that an oriented edge is open depends on its orientation.
For instance, when $\GG = \bbT_{\dd}$, an edge emanating from a type~$1$ vertex to a type~$2$ vertex has probability of being open different from an edge emanating from a type~$2$ vertex to a type~$1$ vertex.
This feature gives the anisotropy of the oriented percolation model.
Notice also that the cluster of the root has infinite size if and only if there exists an infinite sequence of distinct vertices $\raiz=x_0, x_1, x_2, \dots$ such that $\s{x_{j}}{x_{j+1}}$ for all $j \geq 1$.
Of course, this event is equivalent to the survival of $\FM{\GG}{p, \eta}$. 

\subsection{Biregular trees}
\label{SS: BT}

Now let us consider $\GG = \bbT_{\dd}$.
Our next purpose is to obtain an explicit formula for the probability that an oriented edge is open. 
Toward this end, let $\alpha^{(\dd)}(p)$ (\textit{resp.}~$\beta^{(\dd)}(p)$) denote the probability that a type~$1$ (\textit{resp.}~type~$2$) frog ever visits a type~$2$ (\textit{resp.}~type~$1$) neighbor vertex. 
Avoiding a cumbersome notation, we sometimes write $\alpha(p)$ or simply $\alpha$ (\textit{resp.}~$\beta(p)$, $\beta$). 
With a typical abuse of language, we call the percolation model associated to $\FM{\bbT_{\dd}}{p, \eta}$ the \textit{anisotropic percolation model on the biregular tree} $\bbT_{\dd}$, with parameters $\alpha$, $\beta$ and ruled by $\eta$. 
We denote it by $\APM{\bbT_{\dd}}{\alpha, \beta, \eta}$.

The following lemma about hitting probabilities of SRWs on $\bbT_{\dd}$ provides a formula for $\alpha$ and $\beta$. 
Recall that $\VV_1$ and $\VV_2$ are respectively the set of vertices at even and odd distance from the root of~$\bbT_{\dd}$.

\begin{lem}
\label{L: Prob open edge}
Let $x\sim y$ be a pair of neighbor vertices, and suppose $(x, y)\in \VV_i \times\VV_j$, with 
$i, j \in \{1, 2\}$, $i \neq j$.
Then, 
\begin{equation*}
\dsP[y\in\mcR_x^1]=
\begin{cases}
\alpha(p) &\text{if } i=1, j=2, \\[0.2cm]
\beta(p)  &\text{if } i=2, j=1, 
\end{cases}
\end{equation*}
where $\alpha$ and $\beta$ are given in Definition \ref{D: tudo}.
\end{lem}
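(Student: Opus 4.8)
The plan is to compute the one-frog hitting probability $\dsP[y\in\mcR_x^1]$ directly, by setting up a first-step (one-step) recursion for the probability that a single killed simple random walk started at $x$ ever reaches a fixed neighbor $y$. Because $\bbT_{\dd}$ is a tree, the walk can only reach $y$ by first traversing the edge $xy$ at some point, and removing that edge disconnects the tree into the branch containing $x$ and the branch containing $y$; this is what makes an explicit computation feasible. Recall that before each jump the frog survives with probability $p$, so the effective walk is a $p$-killed SRW: from a type~$1$ vertex each of the $(d_1+1)$ neighbors is chosen with probability $p/(d_1+1)$ and with probability $(1-p)$ the walk dies, and symmetrically from a type~$2$ vertex with $(d_2+1)$ neighbors.

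First I would introduce two return-type quantities adapted to the bipartite structure. Let $u$ denote the probability that a frog started at a type~$1$ vertex $x$ ever reaches a \emph{specified} type~$2$ neighbor, and let $v$ be the analogous probability starting from a type~$2$ vertex toward a specified type~$1$ neighbor; these are exactly the $\alpha$ and $\beta$ we want to identify. Conditioning on the first step, from $x$ (type~$1$) the walk either jumps directly onto $y$ (probability $p/(d_1+1)$), dies, or jumps to one of the other $d_1$ type~$2$ neighbors, from each of which it must first return to $x$ before it can try again to reach $y$. The key observation, standard on trees, is that the probability of ever reaching $x$ from such a type~$2$ neighbor is governed by $v$ by symmetry: returning from a type~$2$ vertex to its type~$1$ neighbor has the same law as the event defining $v$. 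This yields a clean pair of coupled equations,
\begin{equation*}
\alpha = \frac{p}{d_1+1}\bigl(1 + d_1\,\beta\,\alpha\bigr),
\qquad
\beta = \frac{p}{d_2+1}\bigl(1 + d_2\,\alpha\,\beta\bigr),
\end{equation*}
where in each equation the factor $\beta\alpha$ (resp.\ $\alpha\beta$) accounts for a detour step that returns to the starting vertex and then restarts the attempt. The main work is then algebraic: solve this $2\times 2$ system for $\alpha$ and $\beta$.

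The solution proceeds by expressing the product $\alpha\beta$ from one relation and substituting, which reduces the system to a quadratic in a single unknown; the discriminant that emerges is precisely $\Delta=\kappa^2-2\kappa(d_1+d_2)p^2+(d_2-d_1)^2p^4$ with $\kappa=(d_1+1)(d_2+1)$, and choosing the root with the minus sign in front of $\sqrt{\Delta}$ gives the formulas in \eqref{F: Prob-alpha} and \eqref{F: Prob-beta}. I would verify that this is the correct root by checking that it yields probabilities in $[0,1]$ and that it produces $\alpha,\beta\to 0$ as $p\to 0$, matching the boundary values in Definition~\ref{D: tudo}; the other root exceeds $1$ and is discarded. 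The $p=0$ case is immediate since a frog that dies before its first jump never leaves $x$. The step I expect to be the main obstacle is justifying the coupled recursion rigorously rather than heuristically: one must argue that after the frog steps to a sibling neighbor, the conditional probability of eventually hitting $y$ factors as (return to $x$) $\times$ (hit $y$ from $x$), which relies on the strong Markov property at the hitting time of $x$ together with the fact that on a tree every path from the sibling branch to $y$ passes through $x$. Making the independence and the recursive self-similarity precise — in particular checking that the relevant first-passage events decompose correctly under the $p$-killing — is where care is needed; the remaining algebra is routine.
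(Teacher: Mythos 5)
Your proposal follows essentially the same route as the paper: the paper also conditions on the first jump to obtain exactly your coupled system, writing it for the generating functions $\dsE[p^{\tau_{12}}]$, $\dsE[p^{\tau_{21}}]$ (which coincide with the killed-walk hitting probabilities once one conditions on the geometric lifetime), and then solves the resulting quadratic. The strong Markov/tree-structure factorization you single out as the delicate step is the same step the paper relies on, and your two equations are identical to the paper's.

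There is, however, a genuine gap in your root-selection argument at $p=1$, a case the lemma covers. Writing $z=\alpha\beta$, the system collapses to
\begin{equation*}
p^2 d_1 d_2\, z^2+\bigl(p^2(d_1+d_2)-\kappa\bigr)z+p^2=0,
\qquad \kappa=(d_1+1)(d_2+1),
\end{equation*}
and for $p<1$ your plan is sound: the left-hand side evaluated at $z=1$ equals $\kappa(p^2-1)<0$, so the larger root satisfies $z>1$, forcing $\alpha>1$ or $\beta>1$ for the spurious solution, which can then be discarded. But at $p=1$ the two solution pairs are $(\alpha,\beta)=(1,1)$ and the pair in Definition~\ref{D: tudo} (for instance, for $(d_1,d_2)=(2,4)$ they are $(1,1)$ and $(5/12,\,3/10)$), and \emph{both} are legitimate probability vectors, so ``the other root exceeds $1$'' fails precisely there. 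Your secondary criterion ($\alpha,\beta\to 0$ as $p\to 0$) cannot rescue this at the isolated point $p=1$ unless you first establish left-continuity of the true hitting probability in $p$ at $1$; that is not automatic, and indeed it is exactly how the paper disposes of this case, via
\begin{equation*}
\dsP[\tau_{ij}<\infty]=\lim_{p\to 1^-}\dsE\bigl[p^{\tau_{ij}}\bigr]
\end{equation*}
(monotone convergence/Abel-type argument). Alternatively, you could rule out the solution $(1,1)$ by invoking transience of the simple random walk on $\bbT_{\dd}$ when $d_1 d_2\geq 2$. Either way, the case $p=1$ needs a separate argument that your proposal does not supply.
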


\begin{proof}
Let $\tau_{ij}:=\tau_{xy}$ be the first time when the simple random walk on $\bbT_{\dd}$ starting from $x$ visits $y$. 
Suppose first that $p<1$, so conditioning on the lifetime of the frog located at $x$, we have 
\begin{equation*}
\dsP[y\in\mcR_x^1]=\dsE[p^{\tau_{ij}}].
\end{equation*} 
Now by conditioning on the first jump of the frog at $x$, we have 
\begin{align*}
\dsE[p^{\tau_{12}}]&=\frac{1}{d_1+1}p+\frac{d_1}{d_1+1}p\dsE[p^{\tau_{12}}]\dsE[p^{\tau_{21}}]\\
\dsE[p^{\tau_{21}}]&=\frac{1}{d_2+1}p+\frac{d_2}{d_2+1}p\dsE[p^{\tau_{12}}]\dsE[p^{\tau_{21}}]
\end{align*}

By right-continuity of the probability generating functions of $\tau_{12}$ and $\tau_{21}$, it follows that $\lim_{p\to 0^+} \dsE[p^{\tau_{12}}]=\lim_{p\to 0^+}\dsE[p^{\tau_{21}}]=0$. Therefore, \eqref{F: Prob-alpha} and \eqref{F: Prob-beta} are the only possible solutions for the previous system of equations.
This concludes the proof for $p<1$. 
Finally, if $p=1$, then
\[ \dsP[y\in\mcR_x^1]=\dsP[\tau_{ij}<\infty]=\lim_{p\to 1^-} \dsE[p^{\tau_{ij}}]=\frac{d_j+1}{d_j(d_i+1)}.\qedhere \]
\end{proof}

\begin{obs}
If $ d_1 \leq d_2 $, then $ \alpha^{(\dd)}(p) \geq \beta^{(\dd)}(p)$ .
\end{obs}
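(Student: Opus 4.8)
The plan is to avoid working directly with the explicit radical expressions \eqref{F: Prob-alpha} and \eqref{F: Prob-beta}, which would force an unwieldy manipulation of the common term $\sqrt{\Delta}$, and instead to exploit the defining system of equations for $\alpha$ and $\beta$ derived within the proof of Lemma~\ref{L: Prob open edge}. The case $p = 0$ is trivial, since $\alpha(0) = \beta(0) = 0$; so I would fix $p \in (0, 1]$ for the remainder.

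Recall from the proof of Lemma~\ref{L: Prob open edge} that $\alpha = \dsE[p^{\tau_{12}}]$ and $\beta = \dsE[p^{\tau_{21}}]$ solve
\begin{align*}
\alpha &= \frac{p}{d_1+1} + \frac{d_1\,p}{d_1+1}\,\alpha\beta, \\
\beta  &= \frac{p}{d_2+1} + \frac{d_2\,p}{d_2+1}\,\alpha\beta.
\end{align*}
Writing $P := \alpha\beta$, so that $\alpha = \tfrac{p}{d_1+1}(1 + d_1 P)$ and $\beta = \tfrac{p}{d_2+1}(1 + d_2 P)$, I would subtract the second identity from the first and reduce to the common denominator $\kappa = (d_1+1)(d_2+1)$. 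The crux of the argument is that the resulting numerator $(1+d_1 P)(d_2+1) - (1+d_2 P)(d_1+1)$ collapses after cancellation of the $d_1 d_2 P$ terms to the factored form $(d_2 - d_1)(1 - P)$, yielding the clean identity
\[
\alpha - \beta = \frac{p\,(d_2 - d_1)\,(1 - P)}{\kappa}.
\]

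To conclude, I would invoke the probabilistic meaning of $\alpha$ and $\beta$: being hitting probabilities identified in Lemma~\ref{L: Prob open edge}, they lie in $[0, 1]$, whence $P = \alpha\beta \in [0, 1]$ and $1 - P \geq 0$. Since $p > 0$ and $\kappa > 0$, the sign of $\alpha - \beta$ coincides with that of $d_2 - d_1$, and the hypothesis $d_1 \leq d_2$ makes it nonnegative, giving $\alpha(p) \geq \beta(p)$. I do not anticipate any genuine analytic difficulty here; the only real step is recognizing that one should discard the closed-form expressions in favour of the functional equations, after which everything reduces to the one-line factorisation above together with the elementary bound $\alpha\beta \leq 1$.
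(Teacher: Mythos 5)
Your proof is correct, but note that the paper itself offers no proof of this remark at all --- it is stated bare, immediately after Lemma~\ref{L: Prob open edge}, with the implicit justification being a direct comparison of the closed-form expressions \eqref{F: Prob-alpha} and \eqref{F: Prob-beta}. Your route through the first-step system is a genuinely different and cleaner argument. The key cancellation checks out: writing $P=\alpha\beta$, one has
\[
(1+d_1P)(d_2+1)-(1+d_2P)(d_1+1)=(d_2-d_1)(1-P),
\]
hence $\alpha-\beta=p(d_2-d_1)(1-P)/\kappa$, and $P\leq 1$ because $\alpha$ and $\beta$ are probabilities by Lemma~\ref{L: Prob open edge}. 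One small point to keep honest: the paper derives the functional system only for $p<1$, so at $p=1$ you should either invoke the same first-step decomposition for the hitting probabilities $\dsP[\tau_{ij}<\infty]$, or note that the system extends to $p=1$ by continuity (indeed $\alpha(1)\beta(1)=1/(d_1d_2)$ and the identities are verified directly). For comparison, the direct route from Definition~\ref{D: tudo} does go through, but is messier: putting both fractions over the common denominator $2pd_1d_2(d_1+1)(d_2+1)$ gives
\[
\alpha-\beta=\frac{(d_2-d_1)\bigl[\sqrt{\Delta}-\kappa+p^2(2d_1d_2+d_1+d_2)\bigr]}{2pd_1d_2(d_1+1)(d_2+1)},
\]
and the nonnegativity of the bracket requires the further identity
\[
\Delta-\bigl[\kappa-p^2(2d_1d_2+d_1+d_2)\bigr]^2=4d_1d_2\,\kappa\,p^2(1-p^2)\geq 0.
\]
So your approach buys a one-line factorisation and a transparent probabilistic reason for the sign ($1-\alpha\beta\geq 0$), whereas the direct manipulation of the radicals, while self-contained within Definition~\ref{D: tudo}, needs this extra squaring argument to control $\sqrt{\Delta}$.
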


\begin{lem}
\label{L: Prob long open edge}
Let $x$ and $y$ be two vertices of $\bbT_{\dd}$ with $(x, y)\in \VV_i \times\VV_j$, $i, j \in \{1, 2\}$, and let $k = \dist(x, y) \geq 1$.
Let $\pi_\eta(i, j, k)$ denote the probability that the oriented edge $\vxy$ is open.
Then, 
\begin{equation}
\label{F: prob open edge aleator}
\pi_{\eta}(i, j, k)=
\begin{cases}
1-\varphi(1-\alpha^n\beta^{n-1}) 	&\text{if } i=1, j=2, k=2n-1, \\[0.1cm]
1-\varphi(1-\alpha^{n-1}\beta^n) 	&\text{if } i=2, j=1, k=2n-1, \\[0.1cm]
1-\varphi(1-\alpha^n\beta^n) 			&\text{if } i=j, k=2n, 
\end{cases}
\end{equation}
where $\varphi(s)=\dsE (s^\eta)$, $s\in [0, 1]$, is the probability generating function of $\eta$. 
\end{lem}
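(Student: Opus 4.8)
The plan is to reduce the probability that the long oriented edge $\vxy$ is open to the hitting probability of a single frog, and then to compute the latter by factorizing along the unique path of $\bbT_{\dd}$ joining $x$ to $y$.

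First I would condition on $\eta(x)$. Write $r$ for the probability that one frog started at $x$ ever reaches $y$, i.e.\ $r=\dsP[y\in\mcR_x^1]$; this value is common to all frogs initially at $x$, since they are independent copies. Given $\eta(x)=N$ with $N\geq 1$, the $N$ frogs evolve independently, so $y\notin\mcR_x$ exactly when none of them hits $y$, which has conditional probability $(1-r)^N$. When $\eta(x)=0$ we have $\mcR_x=\{x\}$, and since $k\geq1$ forces $y\neq x$, again $y\notin\mcR_x$, in agreement with $(1-r)^0=1$. Averaging over $\eta$ gives
\[ \dsP[\vxy \text{ closed}]=\dsP[y\notin\mcR_x]=\dsE\!\left[(1-r)^{\eta}\right]=\varphi(1-r), \]
so that $\pi_\eta(i,j,k)=1-\varphi(1-r)$. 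It then remains to identify $r$ in each of the three cases.

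Second, I would compute $r$ exactly as in the proof of Lemma~\ref{L: Prob open edge}. For $p<1$, conditioning on the geometric lifetime yields $r=\dsE[p^{\tau_{xy}}]$, where $\tau_{xy}$ is the first hitting time of $y$ by the SRW started at $x$. Let $x=v_0,v_1,\dots,v_k=y$ be the unique path from $x$ to $y$. Because $\bbT_{\dd}$ is a tree, the walk must hit $v_1,v_2,\dots$ in order before reaching $y$; by the strong Markov property the successive passage times are independent, each distributed as a nearest-neighbor hitting time between vertices of the corresponding types. Hence $\tau_{xy}=\sum_{m=0}^{k-1}\tau_{v_m v_{m+1}}$ is a sum of independent terms, and
\[ r=\dsE[p^{\tau_{xy}}]=\prod_{m=0}^{k-1}\dsE[p^{\tau_{v_m v_{m+1}}}]. \]
By Lemma~\ref{L: Prob open edge}, each factor equals $\alpha(p)$ when $v_m$ is a type~$1$ vertex and $\beta(p)$ when $v_m$ is a type~$2$ vertex. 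The case $p=1$ follows from the same identity by letting $p\to1^-$, as in Lemma~\ref{L: Prob open edge}.

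Finally, I would count the factors. Along the path the types alternate, so the exponents of $\alpha$ and $\beta$ are determined by the endpoints and the parity of $k$: if $(i,j)=(1,2)$ and $k=2n-1$ there are $n$ edges of type $1\to2$ and $n-1$ of type $2\to1$, giving $r=\alpha^n\beta^{n-1}$; the case $(i,j)=(2,1)$, $k=2n-1$ gives $r=\alpha^{n-1}\beta^n$ by symmetry; and when $i=j$, $k=2n$, the path uses $n$ edges of each type, giving $r=\alpha^n\beta^n$. Substituting into $\pi_\eta=1-\varphi(1-r)$ yields \eqref{F: prob open edge aleator}. The only step that requires genuine care is the factorization of $\dsE[p^{\tau_{xy}}]$ along the path: it rests on the uniqueness of the path in the tree together with the strong Markov property, which together guarantee that the successive edge-crossing times are independent and distributed according to their type, thereby reducing the multi-step computation to the nearest-neighbor quantities $\alpha$ and $\beta$ of Lemma~\ref{L: Prob open edge}.
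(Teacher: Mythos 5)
Your proposal is correct and follows essentially the same route as the paper: reduce to a single frog by conditioning on $\eta(x)$ (which produces the $1-\varphi(1-\cdot)$ wrapper), then decompose the hitting time of $y$ into a sum of independent nearest-neighbor passage times along the unique path, identified as copies of $\tau_{12}$ and $\tau_{21}$ via Lemma~\ref{L: Prob open edge}. Your write-up is in fact somewhat more explicit than the paper's on both steps (the generating-function reduction and the strong Markov factorization), but the underlying argument is identical.
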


\begin{proof}
By conditioning on the initial number of frogs at vertex $x$, it is enough to consider $\eta \equiv 1$, that is, $\varphi(s) \equiv s$.
In this case, $\pi_{\eta}(i, j, k)$ is the probability that a type~$i$ frog ever visits a type~$j$ vertex at distance $k$. 
So, we need to prove that for $\eta \equiv 1$, 
\begin{equation}
\label{F: prob open edge}
\pi_{\eta}(i, j, k)=
\begin{cases}
\alpha^n\beta^{n-1} &\text{if } i=1, j=2, k=2n-1, \\[0.1cm]
\alpha^{n-1}\beta^n &\text{if } i=2, j=1, k=2n-1, \\[0.1cm]
\alpha^n\beta^n 		&\text{if } i=j, k=2n.
\end{cases}
\end{equation}

Formula \eqref{F: prob open edge} follows from the fact that if $(x, y)\in\VV_1\times\VV_2$ with $k=2n-1$, then the first time when the frog starting from $x$ visits $y$ is equal in distribution to the sum of $2n-1$ independent random variables, such that $n$ of them are independent copies of $\tau_{12}$ and $n-1$ of them are independent copies of $\tau_{21}$.
Using this fact and Lemma~\ref{L: Prob open edge}, we obtain that
\[ \pi_{\eta}(1, 2, 2n-1) = \alpha^n\beta^{n-1}. \]
The other cases are analogous.
\end{proof}

\subsection{A sequence of branching processes dominated by the frog model}
\label{SS: SBP}

The central idea is as follows. 
For each $n\geq 1$, we define a Galton--Watson branching process whose survival implies that the cluster of the root in $\APM{\bbT_{\dd}}{\alpha, \beta, \eta}$ has infinite size. 
For each branching process, we find a sufficient condition which guarantees that the process is supercritical. 
We get in this manner a sequence of upper bounds for the critical probability, which converges to the upper bound given in the statement of Theorem~\ref{T: UBA}. 
This technique of using embedded branching processes is very similar to that used for the contact process on homogeneous trees; see the paper by \citet{LSCP}.

Let us now carry out this plan.

\begin{defn}
\label{D: Sets}
\hfill
\begin{enumerate}

\item[\textnormal{(i)}] We define a partial order on the set $\VV$ as follows: for $x, y\in\VV$, we say that $x \preceq y$ if $x$ belongs to the path connecting $ \raiz $ and $y$; $x\prec y$ if $x \preceq y$ and $x \neq y$. 

\item[\textnormal{(ii)}] For any vertex $ x \neq \raiz $, let $ \VV^+(x) =\{y\in\VV: x\preceq y\}$.
Also define $ \VV^+(\raiz) = \VV \setminus \VV^+(z) $, where $z$ is a fixed vertex neighbor to~$ \raiz $.

\item[\textnormal{(iii)}] For $x \in \VV$ and $k \in \bbN$, define $ L_k (x) = \{ y \in \VV^+(x) : \dist (x, y) = k \} $.
\end{enumerate}
\end{defn}

\begin{defn}
\label{D: k Cam}
For $x\in \VV$ and $y\in L_{k}(x)$, $k\in \bbN$, consider $x_0=x \prec x_1 \prec \cdots \prec x_{k-1} \prec x_{k}=y$, the path connecting $x$ and $y$.
For each $\ell=1, 2, \dots, n-1$, we denote by $[\m{x_0}{x_{\ell}}{}]$ the event that $[\s{x_0}{x_{\ell}}] \cap [\ns{x_0}{x_{\ell+1}}]$.
We define the event $[\p{x_0}{x_{k}}{o}]$ inductively on~$k$ by:
\begin{itemize}
\item[\textnormal{(i)}] If $k=1$, then 
\[[\p{x_0}{x_k}{o}]:=[\s{x_0}{x_k}].\] 
\item[\textnormal{(ii)}] If $k \geq 2$, then
\[[\p{x_0}{x_{k}}{o}]:=[\s{x_0}{x_{k}}]\cup\bigcup_{\ell=1}^{k-1}[\m{x_0}{x_{\ell}}{}, \p{x_{\ell}}{x_{k}}{o}].\]
\end{itemize}
We denote the complement of $[\p{x}{y}{o}]$ by $[\np{x}{y}{o}]$.
\end{defn}

Now we construct a sequence of Galton--Watson branching processes embedded in $\APM{\bbT_{\dd}}{\alpha, \beta, \eta}$.
Starting from the root, the potential direct descendants of a vertex $x$ are vertices located in $L_k(x)$, where $k=2n$ is an even number.
Roughly speaking, a vertex $y\in L_{2n}(x)$ is said to be a child of vertex $x$ if and only if $[\p{x}{y}{o}]$. 
More formally, let $n\in\bbN$ be fixed.
Define $\YY_{0, n}:=\{\raiz\}$, and for $\ell\in\bbN$ define 
\[\YY_{\ell, n}:=\bigcup_{x\in \YY_{\ell-1, n}}\{y\in L_{2n}(x): \p{x}{y}{o}\}.\]
Let $Y_{\ell, n}=|\YY_{\ell, n}|$ be the cardinality of $\YY_{\ell, n}$. 

\begin{lem}
\label{P: BPdom}
For every $n\in\bbN$, $\{Y_{\ell, n}\}_{\ell\in\bbN_0}$ is a Galton--Watson branching process whose survival implies the occurrence of percolation. 
In addition, its mean number of offspring per individual satisfies 
$\dsE[Y_{1, n}]\geq (d_1 d_2)^n \, \phi^{(\dd, q)}_n(p)$, where
\begin{equation}
\label{E: prob offspring}
\phi^{(\dd, q)}_n(p) := q[\alpha(p)\beta(p)(1+q(1-\beta(p))]^n[1+q(1-\alpha(p))]^{n-1}.
\end{equation}
\end{lem}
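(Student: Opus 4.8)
The plan is to establish the three assertions separately: that $\{Y_{\ell,n}\}_{\ell\in\bbN_0}$ is a Galton--Watson process, that its survival forces the cluster of $\raiz$ to be infinite, and finally the offspring bound $\dsE[Y_{1,n}]\ge (d_1d_2)^n\,\phi^{(\dd,q)}_n(p)$.

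\medskip

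\emph{Branching structure.} Since $\raiz$ is a type-1 vertex and each generation lies $2n$ levels deeper, every member of $\YY_{\ell,n}$ is a type-1 vertex at level $2n\ell$. For a fixed $x$, inspecting Definition~\ref{D: k Cam} shows that the set $\{y\in L_{2n}(x):\p{x}{y}{o}\}$ is measurable with respect to the frogs sitting at the descendants of $x$ at distance at most $2n-1$ (the endpoint $y$ itself is never needed to decide $\p{x}{y}{o}$, only the sites strictly before it on the geodesic, together with $x$). I would then record two facts: the frogs producing the children of $x$ are disjoint from those that earlier certified $x\in\YY_{\ell,n}$ (the latter live at the strict ancestors of $x$ on the path from its parent, none equal to $x$); and for distinct same-generation $x,x'$ the subtrees $\VV^{+}(x),\VV^{+}(x')$ are disjoint, so the two offspring sets depend on disjoint, hence independent, frog families. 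As the truncated depth-$(2n-1)$ subtree below any type-1 vertex is isomorphic and the frogs are i.i.d., the offspring laws coincide, giving the Galton--Watson property.

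\medskip

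\emph{Survival implies percolation.} By induction on $k$ via the recursion in Definition~\ref{D: k Cam}, I would verify that $\p{x}{y}{o}$ implies the existence of a chain $x=z_0\prec z_1\prec\cdots\prec z_s=y$ along the geodesic with $\s{z_i}{z_{i+1}}$ for all $i$: the base case is immediate and the inductive step follows the two alternatives in the union. If the process survives, König's lemma yields an infinite ray $\raiz=v_0,v_1,\dots$ of descendants with $v_{j+1}$ a child of $v_j$; concatenating the open chains supplied by $\p{v_j}{v_{j+1}}{o}$ produces an infinite sequence of distinct vertices connected by open edges, i.e.\ an infinite cluster at $\raiz$, which is percolation.

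\medskip

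\emph{The offspring mean.} Counting forward neighbours level by level gives $|L_{2n}(\raiz)|=(d_1d_2)^n$, and by the symmetry of $\bbT_{\dd}$ the probability $\dsP[\p{\raiz}{y}{o}]$ is the same for all $y\in L_{2n}(\raiz)$; hence $\dsE[Y_{1,n}]=(d_1d_2)^n\,\dsP[\p{x_0}{x_{2n}}{o}]$ for a fixed geodesic $x_0=\raiz\prec x_1\prec\cdots\prec x_{2n}$, and the task reduces to $\dsP[\p{x_0}{x_{2n}}{o}]\ge\phi^{(\dd,q)}_n(p)$. Here the decisive structural remark is that $\mcR_{x_0}$ is a connected subtree containing $x_0$, so the sites of the geodesic that it meets form an initial segment $\{0,\dots,M\}$; therefore $[\m{x_0}{x_\ell}{}]=\{M=\ell\}$ and the union defining $\p{x_0}{x_k}{o}$ is actually disjoint. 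Combined with the independence of the frogs at distinct vertices and with Lemma~\ref{L: Prob long open edge}, this produces the renewal identity
\[
\dsP[\p{x_0}{x_m}{o}]=\dsP[M\ge m]+\sum_{\ell=1}^{m-1}\dsP[M=\ell]\,\dsP[\p{x_\ell}{x_m}{o}],
\]
a closed two-type recursion for $u_m:=\dsP[\p{x_0}{x_m}{o}]$ and its type-2 analogue $v_m$.

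\medskip

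\emph{Bounding the recursion, and the main obstacle.} For the one-frog-per-occupied-site law (each vertex occupied independently with probability $q$), the relay encoded by the recursion uses a single frog per hand-off, and iterating it the factors telescope: the first step contributes $q\alpha$, each hand-off at a type-2 site contributes $\beta(1+q(1-\beta))$ and each at a type-1 site $\alpha(1+q(1-\alpha))$, so that $u_{2n}=q\alpha\,[\beta(1+q(1-\beta))]^{n}[\alpha(1+q(1-\alpha))]^{n-1}$, which is exactly $\phi^{(\dd,q)}_n(p)$ from \eqref{E: prob offspring}. The remaining step is to show that an arbitrary $\eta$ with the same $q=1-\rho_0$ cannot give a smaller value. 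The convexity bound $1-\varphi(1-s)\ge qs$ controls the marginals, showing that $\dsP[M\ge\ell]$ dominates its one-frog counterpart for every $\ell$, and the same-start monotonicity $\{\p{x_0}{x_{j+1}}{o}\}\subseteq\{\p{x_0}{x_j}{o}\}$ gives that $u_j$ is nonincreasing in $j$. The hard part, which I expect to require the most care, is that $\p{x_0}{x_k}{o}$ is \emph{not} monotone in the number of frogs: the condition $\ns{x_0}{x_{\ell+1}}$ hidden inside $\m{x_0}{x_\ell}{}$ is a decreasing event, and additional frogs can make the relay overshoot onto an empty site and get stuck, so a naive coupling to the one-frog configuration cannot work. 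I would therefore carry out the comparison $u_m\ge u_m^{\mathrm{Ber}}$ inside the recursion itself, by strong induction on $m$, matching the reach distributions via the convexity bound while handling the non-monotone exact-reach terms by hand; making this comparison rigorous is the genuine technical core of the argument.
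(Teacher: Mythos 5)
Everything you actually complete is correct, and it coincides with the paper's own argument: the Galton--Watson property via disjointness of the frog families involved (the event $[\p{x}{y}{o}]$ depends only on frogs at the geodesic vertices strictly before $y$, and forward subtrees of distinct same-generation vertices are disjoint), survival implying percolation by concatenating open chains, the identity $\dsE[Y_{1,n}]=(d_1d_2)^n\,\dsP[\p{x_0}{x_{2n}}{o}]$, the disjoint renewal decomposition coming from the initial-segment structure of $\mcR_{x_0}$ on the geodesic (this is the paper's Lemma~\ref{L: Prob cam open}), and the product formula for the one-frog-per-occupied-site law, which is exactly the paper's computation of $F_n^{(q)}$ obtained by setting $\varphi(s)=1-q(1-s)$ in the recursion.

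The proposal is nevertheless incomplete at the decisive step: the comparison $\dsP[\p{x_0}{x_{2n}}{o}]\geq\phi^{(\dd,q)}_n(p)$ for general $\eta$ is announced as ``the genuine technical core'' but never carried out, so as written this is a gap. You are, however, right about why the step is delicate: the relay event is not increasing --- extra frogs can push the exact-reach point onto an unoccupied vertex, so the $\eta'$-relay (with $\eta'=\ind_{\{\eta\geq 1\}}$) can succeed while the $\eta$-relay fails under the natural coupling. It is worth noting that the paper itself dispatches this step with a single appeal to stochastic domination (``$\eta'$ is dominated by $\eta$, hence $\nu_{\eta}(1,1,2n)\geq\nu_{\eta'}(1,1,2n)$''), which is not a valid inference for a non-monotone event; your diagnosis thus identifies a genuine soft spot in the published proof, not an artifact of your approach. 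Two ways to close the gap: (i) carry out your strong induction inside the recursion, which in addition to the convexity bound $1-\varphi(1-s)\geq qs$ requires showing that for the Bernoulli configuration the relay probability is nondecreasing in the starting vertex along the geodesic; this can be read off from the explicit product formulas and reduces to inequalities such as $\alpha(p)[1+q(1-\beta(p))]\leq 1$ and $\beta(p)[1+q(1-\alpha(p))]\leq 1$ for the actual values $(\alpha(p),\beta(p))$ --- this proves the lemma exactly as stated; or (ii) sidestep the comparison entirely by running the whole construction on the first-frog ranges only (i.e., for the truncated model $\eta'$), whose branching process has offspring mean exactly $(d_1d_2)^n\phi^{(\dd,q)}_n(p)$, and whose survival yields an infinite open chain for the $\eta'$ ranges and hence for the $\eta$ ranges, since the percolation event --- unlike the relay event --- is increasing under the inclusion of ranges. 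Route (ii) proves a variant of Lemma~\ref{P: BPdom} that serves identically in the proof of Theorem~\ref{T: UBA}.
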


Since the first claim in Lemma~\ref{P: BPdom} is clear, to prove it, we have to compute $\dsE[Y_{1, n}]$.
To accomplish this, we show a recursive formula for the probability of the event $[\p{x}{y}{o}]$.
Let us define the domain $\mathcal{A}_{\dd}:= \left[0, \frac{d_2+1}{d_2(d_1+1)}\right]\times\left[0, \frac{d_1+1}{d_1(d_2+1)}\right] \subseteq [0, 1]^2$. 
In formulas that appear from this point on, we assume that a summation of the form $\sum_{1}^0$ equals $0$.

\begin{lem}
\label{L: Prob cam open}
For $(x, y)\in \VV_i \times\VV_j$, $i, j \in \{1, 2\}$, with $\dist(x, y)=k \geq 1$, define
\[ \nu_{\eta}(i, j, k) := \dsP[\p{x}{y}{o}]. \]
For every $n \geq 1$, there exists functions $K_n$, $F_n$, $K_n^{\star}$, $F_n^{\star}$ with domain $\mathcal{A}_{\dd}$, not depending on $\dd$, such that
\begin{align*}
\nu_{\eta}(1, 2, 2n-1)&=K_n(\alpha(p), \beta(p)), 
&\nu_{\eta}(1, 1, 2n)&=F_n(\alpha(p), \beta(p)), \\[0.2cm]
\nu_{\eta}(2, 1, 2n-1)&=K_n^{\star}(\alpha(p), \beta(p)), 
&\nu_{\eta}(2, 2, 2n)&=F_n^{\star}(\alpha(p), \beta(p)).
\end{align*} 
\end{lem}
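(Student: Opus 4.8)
The plan is to prove Lemma~\ref{L: Prob cam open} by induction on $n$, establishing simultaneously the existence of the four functions $K_n$, $F_n$, $K_n^{\star}$, $F_n^{\star}$ on the domain $\mathcal{A}_{\dd}$. The central tool is a recursive decomposition of the event $[\p{x}{y}{o}]$ provided by Definition~\ref{D: k Cam}(ii). First I would observe that the probabilities $\nu_{\eta}(i,j,k)$ should be expressible purely in terms of $\alpha$ and $\beta$ (together with $\varphi$, equivalently $q$), because of the self-similar structure of $\bbT_{\dd}$: the behavior of the percolation model below a type~$i$ vertex at distance $k$ does not depend on the global pair $(\dd)$ except through the hitting probabilities $\alpha$, $\beta$ computed in Lemma~\ref{L: Prob open edge}. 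Thus the statement that the $K_n, F_n, \dots$ do not depend on $(\dd)$ is really a structural observation that the recursion closes within the algebra generated by $\alpha$ and $\beta$.

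Next I would set up the induction. The base case $n=1$ handles $k=1$ (immediate from Definition~\ref{D: k Cam}(i), giving $\nu_{\eta}(i,j,1)=\pi_{\eta}(i,j,1)$ from Lemma~\ref{L: Prob long open edge}) and $k=2$. For the inductive step, fix the path $x=x_0 \prec x_1 \prec \cdots \prec x_k = y$ and use the decomposition
\[
[\p{x_0}{x_k}{o}]=[\s{x_0}{x_k}]\cup\bigcup_{\ell=1}^{k-1}[\m{x_0}{x_{\ell}}{}, \p{x_{\ell}}{x_{k}}{o}].
\]
The key point is that the events in this union are disjoint: the event $[\m{x_0}{x_\ell}{}]$ records that $x_0$ reaches $x_\ell$ but not $x_{\ell+1}$, so different values of $\ell$ correspond to mutually exclusive ``last vertex reached directly from $x_0$''. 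Because the frog trajectories and lifetimes attached to distinct vertices are independent, the event $[\p{x_\ell}{x_k}{o}]$ is independent of $[\m{x_0}{x_\ell}{}]$. This lets me write
\[
\nu_{\eta}(i,j,k)=\pi_{\eta}(i,j,k)+\sum_{\ell=1}^{k-1}\dsP[\m{x_0}{x_\ell}{}]\,\nu_{\eta}(i_\ell,j,k-\ell),
\]
where $i_\ell$ is the type of $x_\ell$, and where $\dsP[\m{x_0}{x_\ell}{}]=\pi_{\eta}(i,i_\ell,\ell)-\pi_{\eta}(i,i_\ell',\ell+1)$ (the probability of reaching $x_\ell$ minus that of reaching $x_{\ell+1}$) is itself a polynomial in $\alpha,\beta,q$ by Lemma~\ref{L: Prob long open edge}. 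Since each $\nu_{\eta}(i_\ell,j,k-\ell)$ with $k-\ell < k$ is, by the induction hypothesis, one of the four functions $K_{m}, F_{m}, K_{m}^{\star}, F_{m}^{\star}$ evaluated at $(\alpha,\beta)$, the whole right-hand side is a fixed function of $(\alpha,\beta)$ with no further dependence on $(\dd)$. Defining $K_n, F_n, K_n^{\star}, F_n^{\star}$ to be these resulting expressions closes the induction, and I would verify that the arguments $(\alpha,\beta)$ indeed lie in $\mathcal{A}_{\dd}$ using the $p=1$ endpoint values $\frac{d_j+1}{d_j(d_i+1)}$ from Lemma~\ref{L: Prob open edge}, which are precisely the corner coordinates of $\mathcal{A}_{\dd}$.

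The step I expect to be the main obstacle is making the disjointness-and-independence claim fully rigorous, since the underlying percolation is \emph{dependent} and long-range: the events $[\s{x_0}{x_m}]$ for different $m$ are all functions of the same collection of trajectories emanating from $x_0$, so I cannot treat them as independent. What saves the recursion is that the decomposition conditions on the \emph{last} vertex $x_\ell$ along the path that is reached directly from $x_0$, after which all further propagation is driven by frogs originating at $x_\ell$ or beyond, whose randomness is independent of the frogs at $x_0$. I would therefore need to argue carefully that $[\m{x_0}{x_\ell}{}]$ depends only on the random objects indexed by $x_0$ (via $\mcR_{x_0}$), while $[\p{x_\ell}{x_k}{o}]$ depends only on random objects indexed by vertices in $\VV^+(x_\ell)\setminus\{x_0\text{'s data}\}$, so that the product form is legitimate. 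Verifying that the two cases $k=2n-1$ and $k=2n$, together with the type bookkeeping $(i,j)$, consistently produce exactly four recursively-defined functions — rather than a proliferation depending on the detailed type pattern along the path — is the bookkeeping that requires care, but it follows because along any path the types strictly alternate, so the only relevant data are the starting type $i$, the ending type $j$, and the distance $k$.
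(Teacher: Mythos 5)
Your proposal is correct and matches the paper's own argument: the paper likewise decomposes $[\p{x_0}{x_k}{o}]$ via Definition~\ref{D: k Cam}, uses $\dsP[\m{x_0}{x_\ell}{}]=\dsP[\s{x_0}{x_\ell}]-\dsP[\s{x_0}{x_{\ell+1}}]$ together with the disjointness (from the tree structure) and the independence of the randomness attached to $x_0$ versus that attached to $x_\ell,\dots,x_{k-1}$, obtains exactly your recursion for $\nu_\eta(i,j,k)$ in terms of $\pi_\eta$ and shorter $\nu_\eta$'s, and concludes by induction on $n$ that everything closes within four functions of $(\alpha,\beta)$ independent of $(\dd)$. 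The only cosmetic difference is that the paper writes out the resulting recursive definitions of $K_n$, $K_n^{\star}$, $F_n$, $F_n^{\star}$ explicitly before verifying they satisfy the recursion, whereas you define them as the expressions produced by the induction.
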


\begin{proof}
For $n \geq 1$ and $(a, b)\in \mathcal{A}_{\dd}$, we define the following sequence of functions recursively:
{\allowdisplaybreaks
\begin{align*}
K_n(a, b) &= [1-\varphi(1-a^n b^{n-1})]
	\begin{aligned}[t]
	&+ \sum_{\ell=1}^{n-1} [\varphi(1-a^{\ell+1}b^\ell)-\varphi(1-a^\ell b^\ell)]K_{n-\ell}(a, b) \\
	&+ \sum_{\ell=1}^{n-1} [\varphi(1-a^\ell b^\ell)-\varphi(1-a^\ell b^{\ell-1})]F_{n-\ell}^{\star}(a, b), 
	\end{aligned}\\[0.2cm]
K^{\star}_n(a, b) &= [1-\varphi(1-b^n a^{n-1})]
	\begin{aligned}[t]
	&+ \sum_{\ell=1}^{n-1} [\varphi(1-b^{\ell+1}a^\ell)-\varphi(1-a^\ell b^\ell)]K^{\star}_{n-\ell}(a, b) \\
	&+ \sum_{\ell=1}^{n-1} [\varphi(1-a^\ell b^\ell)-\varphi(1-b^\ell a^{\ell-1})] F_{n-\ell}(a, b), 
	\end{aligned}\\[0.2cm]
F_n(a, b) &= [1-\varphi(1-a^n b^n)]
	\begin{aligned}[t]
	&+ \sum_{\ell=1}^{n-1} [\varphi(1-a^{\ell+1}b^\ell)-\varphi(1-a^\ell b^\ell)] F_{n-\ell}(a, b) \\
	&+ \sum_{\ell=1}^{n} [\varphi(1-a^\ell b^\ell)-\varphi(1-a^\ell b^{\ell-1})]K_{n+1-\ell}^{\star}(a, b), 
	\end{aligned}\\[0.2cm]
F_n^{\star}(a, b) &= [1-\varphi(1-b^n a^n)]
	\begin{aligned}[t]
	&+ \sum_{\ell=1}^{n-1} [\varphi(1-b^{\ell+1}a^\ell)-\varphi(1-a^\ell b^\ell)]F_{n-\ell}^{\star}(a, b)\\
	&+ \sum_{\ell=1}^{n} [\varphi(1-a^\ell b^\ell)-\varphi(1-b^\ell a^{\ell-1})]K_{n+1-\ell}(a, b).
	\end{aligned}
\end{align*}}%
Next, using that $\dsP[\m{x_0}{x_\ell}{}]=\dsP[\s{x_0}{x_\ell}]-\dsP[\s{x_0}{x_{\ell+1}}]$, we obtain, for every $n \geq 1$, 
{\allowdisplaybreaks
\begin{align*}
\dsP[\p{x_0}{x_{2n-1}}{o}]&=\dsP[\s{x_0}{x_{2n-1}}]+\sum_{\ell=1}^{2n-2} \dsP[\m{x_0}{x_\ell}{}]\dsP[\p{x_\ell}{x_{2n-1}}{o}]\\
&=\dsP[\s{x_0}{x_{2n-1}}]+\sum_{\ell=1}^{2n-2}\{\dsP[\s{x_0}{x_\ell}]-\dsP[\s{x_0}{x_{\ell+1}}]\}\dsP[\p{x_\ell}{x_{2n-1}}{o}], \\
\intertext{and}
\dsP[\p{x_0}{x_{2n}}{o}]&=\dsP[\s{x_0}{x_{2n}}]+\sum_{\ell=1}^{2n-1} \dsP[\m{x_0}{x_\ell}{}]\dsP[\p{x_\ell}{x_{2n}}{o}]\\
&=\dsP[\s{x_0}{x_{2n}}]+\sum_{\ell=1}^{2n-1}\{\dsP[\s{x_0}{x_\ell}]-\dsP[\s{x_0}{x_{\ell+1}}]\}\dsP[\p{x_\ell}{x_{2n}}{o}].
\end{align*}}%

We split the proof into two cases:
\begin{itemize}
\item[\textnormal{(i)}] If $(x_0, x_{2n-1})\in\VV_i\times\VV_j$ with $i \neq j$, then 
\begin{align*}
\nu_\eta(i, j, 2n-1)=\pi_\eta(i, j, 2n-1) &+ \sum_{\ell=1}^{n-1}[\pi_\eta(i, i, 2\ell)-\pi_\eta(i, j, 2\ell+1)]\nu_\eta(i, j, 2(n-\ell)-1)]\\
 &+ \sum_{\ell=1}^{n-1} [\pi_\eta(i, j, 2\ell-1)-\pi_\eta(i, i, 2\ell)]\nu_\eta(j, j, 2(n-\ell)).
\end{align*}

\item[\textnormal{(ii)}] If $(x_0, x_{2n})\in\VV_i\times\VV_i$, then
\begin{align*}
\nu_\eta(i, i, 2n)=\pi_\eta(i, i, 2n) &+ \sum_{\ell=1}^{n-1} [\pi_\eta(i, i, 2\ell)-\pi_\eta(i, j, 2\ell+1)]\nu_\eta(i, i, 2(n-\ell))\\
 &+ \sum_{\ell=1}^{n} [\pi_\eta(i, j, 2\ell-1)-\pi_\eta(i, i, 2\ell)]\nu_\eta(j, i, 2(n-\ell)+1).
\end{align*}
\end{itemize}
Using Equation~\eqref{F: prob open edge aleator}, the result follows by induction on $n$.
\end{proof}

\begin{proof}[Proof of Lemma~\ref{P: BPdom}]
From Lemma~\ref{L: Prob cam open}, the Galton--Watson branching process has mean number of progeny per individual given by 
\[ \dsE[Y_{1, n}] = (d_1d_2)^n \, \nu_{\eta}(1, 1, 2n) = (d_1d_2)^n \, F_n(\alpha(p), \beta(p)). \]
To obtain a lower bound for this expected value (not depending on the function $\varphi$), we truncate the initial configuration of the frog model.
We consider the modified initial configuration $\eta^{\prime}$ by 
\[\eta^{\prime}(x) = \ind_{\{\eta(x) \geq 1 \}}, \, x \in \VV.\]
Since the initial condition $\eta^{\prime}$ is dominated by $\eta$ in the usual stochastic order, it follows that
\[ \nu_{\eta}(1, 1, 2n) \geq \nu_{\eta^{\prime}}(1, 1, 2n). \]
But for the restricted frog model with initial configuration ruled by $\eta^{\prime}$, 
\[ \nu_{\eta^{\prime}}(1, 1, 2n) = F_n^{(q)}(\alpha(p), \beta(p)), \]
where $F_n^{(q)}(a, b)$ is given by the formulas stated in the proof of Lemma~\ref{L: Prob cam open} with the choice $\varphi(s) = 1 - q (1 - s)$.
Using these formulas and induction on $n$, we have that $F_n^{(q)}$ satisfies
\[ F^{(q)}_{n+1}(a, b)=a b [1+q(1-a)][1+q(1-b)]F^{(q)}_{n}(a, b), \, n \geq 1, \]
with initial condition $F_1^{(q)}(a, b)=q [ a b (1 + q (1-b))]$.
Consequently, for every $n \geq 1$, 
\[ F_n^{(q)}(a, b)=q[ab(1+q(1-b)]^n[1+q(1-a)]^{n-1}. \]
The result follows by noting that $\phi^{(\dd, q)}_n(p)$ given in~\eqref{E: prob offspring} is simply $F_n^{(q)}(\alpha(p), \beta(p))$.
\end{proof}

\begin{obs}
We underline that in the case when the random variable $\eta$ has Bernoulli distribution, the functions $K_n$, $F_n$, $K_n^{\star}$ and $F_n^{\star}$ defined in Lemma~\ref{L: Prob cam open} are polynomial in~$(a, b)$, but this does not hold in general.
Of course, by dealing only with this initial distribution, we could derive Equation~\eqref{E: prob offspring} from Definition~\ref{D: k Cam} in a more direct way.
Instead, we prefer to consider the general initial configuration in order to establish Lemma~\ref{L: Prob cam open}, for future reference.
\end{obs}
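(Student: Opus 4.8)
The statement comprises two independent assertions, which I would establish separately: that the four functions $K_n,F_n,K_n^{\star},F_n^{\star}$ of Lemma~\ref{L: Prob cam open} are polynomials in $(a,b)$ when $\eta$ is Bernoulli, and that this polynomiality breaks down for a general initial law.

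For the Bernoulli case the starting observation is that the generating function is then affine: with $q=1-\rho_0$ (so that $\rho_1=q$), we have $\varphi(s)=\rho_0+\rho_1 s=(1-q)+qs=1-q(1-s)$. Consequently every building block occurring in the four recursions of Lemma~\ref{L: Prob cam open} is polynomial in $(a,b)$, since $\varphi(1-a^ib^j)=1-q\,a^ib^j$ and each bracketed difference, for instance $\varphi(1-a^{\ell+1}b^\ell)-\varphi(1-a^\ell b^\ell)=q\,a^\ell b^\ell(1-a)$, is likewise a polynomial. I would then argue by induction on $n$. The base case is immediate: $K_1(a,b)=1-\varphi(1-a)=qa$ and $K_1^{\star}(a,b)=qb$, while $F_1$ and $F_1^{\star}$ are finite sums of products of the polynomial blocks just described, hence polynomial. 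For the inductive step one must respect the intra-level coupling of the recursions, so I would first treat $K_n$ and $K_n^{\star}$, whose recursions involve only the polynomial building blocks together with $K_{n-\ell},F_{n-\ell}^{\star}$ and $K_{n-\ell}^{\star},F_{n-\ell}$ at strictly smaller levels $n-\ell<n$; these are polynomials by the induction hypothesis, so $K_n,K_n^{\star}$ are polynomial. One then treats $F_n,F_n^{\star}$, whose recursions involve, besides strictly smaller levels, the terms $K_n^{\star}$ and $K_n$ just shown to be polynomial. In every case the function is a finite sum of products of polynomials, hence a polynomial, which closes the induction and yields the first claim.

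For the failure in general, the plan is to exhibit the obstruction already at $n=1$. Since $K_1(a,b)=1-\varphi(1-a)$ with no intervening sums, it suffices to produce an admissible $\eta$ for which $a\mapsto 1-\varphi(1-a)$ is not a polynomial. Here I would invoke the standard fact that a probability generating function $\varphi(s)=\sum_{k\geq 0}\rho_k s^k$ is a polynomial in $s$ if and only if $\eta$ has finite support: when infinitely many $\rho_k$ are positive one has $\varphi^{(k)}(0)=k!\,\rho_k\neq 0$ for infinitely many $k$, so $\varphi$ possesses nonvanishing derivatives of arbitrarily high order and cannot be a polynomial. Taking $\eta\sim\mathrm{Poisson}(\lambda)$ gives $\varphi(s)=e^{\lambda(s-1)}$ and $K_1(a,b)=1-e^{-\lambda a}$, which is manifestly not polynomial in $a$; the same conclusion holds for any $\eta$ of unbounded support.

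Both halves are essentially routine, and the only delicate point is the negative direction: one must rule out accidental cancellations that might collapse an infinite power series into a polynomial. Locating the failure at $K_1$ sidesteps this concern entirely, because $1-\varphi(1-a)$ is a single unmodified evaluation of $\varphi$, so the nonpolynomiality of $\varphi$ transfers directly with no room for cancellation. Finally, to substantiate the secondary remark that for a Bernoulli law Equation~\eqref{E: prob offspring} can be obtained more directly, I would point to the mechanism already present in the proof of Lemma~\ref{P: BPdom}: the truncation $\eta'=\ind_{\{\eta\geq 1\}}$ is Bernoulli with parameter $q$, and its affine generating function $\varphi(s)=1-q(1-s)$ collapses the four recursions into the single multiplicative relation $F^{(q)}_{n+1}(a,b)=ab[1+q(1-a)][1+q(1-b)]F^{(q)}_{n}(a,b)$, whose closed form is read off at once without appeal to the general Lemma~\ref{L: Prob cam open}.
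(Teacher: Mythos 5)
Your proposal is correct, and it is worth noting that the paper itself offers no proof of this remark: it is asserted, with the Bernoulli mechanism only implicit in the proof of Lemma~\ref{P: BPdom}, where the choice $\varphi(s)=1-q(1-s)$ collapses the recursion for $F_n^{(q)}$ alone. Your positive half is essentially the paper's computation made explicit and extended to all four functions, and you correctly identify the one genuinely delicate point: the recursions are not triangular within a level, since $F_n$ calls $K_n^{\star}$ (the $\ell=1$ term of $\sum_{\ell=1}^{n}$) and $F_n^{\star}$ calls $K_n$, so the induction must establish $K_n,K_n^{\star}$ before $F_n,F_n^{\star}$ at each level --- an ordering the paper never spells out. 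Your negative half goes beyond anything in the paper, which gives no counterexample at all; locating the failure at $K_1(a,b)=1-\varphi(1-a)$ is the right move, precisely for the cancellation-avoidance reason you state. One small point you gloss over: the functions are defined only on the compact domain $\mathcal{A}_{\dd}$, so ``not a polynomial'' must mean not agreeing with any polynomial there; for $\eta\sim\mathrm{Poisson}(\lambda)$ this is immediate since $1-e^{-\lambda a}$ is real-analytic and its Taylor series at $0$ does not terminate, and for general unbounded support one needs the identity theorem to propagate agreement on the subinterval $(1-c,1)$ back to the disk of analyticity of $\varphi$ (which can be subtle at the boundary point $s=1$ when the radius of convergence is exactly $1$, though agreement on the open interval suffices). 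Your substantiation of the final sentence of the remark --- that the truncation $\eta'=\ind_{\{\eta\geq 1\}}$ is Bernoulli($q$) and the multiplicative relation $F^{(q)}_{n+1}(a,b)=ab\,[1+q(1-a)][1+q(1-b)]\,F^{(q)}_{n}(a,b)$ yields \eqref{E: prob offspring} directly --- is exactly the shortcut the authors have in mind. In sum: correct, same mechanism as the paper where the paper has one, with a sound self-supplied argument where it does not.
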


\subsection{Proofs of Theorem~\ref{T: UBA} and Corollary~\ref{C: UB bi-re}}
\label{SS: Proofs}

From Lemma~\ref{P: BPdom}, it follows that, by solving for each $n \geq 1$ the equation in $p$
\[ (d_1 d_2)^n \, \phi^{(\dd, q)}_n(p) = 1, \]
we obtain a sequence of upper bounds for the critical probability $\pc{\bbT_{\dd}, \eta}$.
So, for every $n \geq 1$, we define the function
\begin{equation}
\label{F: f_n}
f^{(\dd, q)}_n(p) = {\left[ \phi^{(\dd, q)}_n(p) \right]}^{1 / n}-\frac{1}{d_1d_2}.
\end{equation}
Notice that, for $p \in [0, 1]$, 
\[ \lim_{n \to \infty} f^{(\dd, q)}_n(p) = f^{(\dd, q)}(p), \]
where $f^{(\dd, q)}(p)$ is defined in~\eqref{F: Function-f}.
To prove the upper bound presented in Theorem~\ref{T: UBA}, we use the following result of Real Analysis, whose proof is included for the sake of completeness. 

\begin{lem}
\label{L: Conv}
Let $\{ f_n \}$ be a sequence of increasing, continuous real-valued functions defined on $[0, 1]$, such that $f_n (0) < 0$ and $f_n (1) > 0$ for every $n$.
Suppose that $\{ f_n \}$ converges pointwise as $n \to \infty$ to an increasing, continuous function $f$ defined on $[0, 1]$, and let $\tilde r_n$ be the unique root of $f_n$ in $[0, 1]$.
Then, there exists $\tilde r = \lim_{n \to \infty} \tilde r_n$ and $f (\tilde r) = 0$.
\end{lem}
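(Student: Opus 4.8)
The plan is to reduce everything to the unique root of the limit function and then show that the roots $\tilde{r}_n$ cannot escape any neighborhood of it. First I would record existence and uniqueness of the objects involved. Since each $f_n$ is continuous and strictly increasing with $f_n(0) < 0 < f_n(1)$, the intermediate value theorem produces a unique $\tilde{r}_n \in (0,1)$ with $f_n(\tilde{r}_n) = 0$. Passing to the limit in the inequalities $f_n(0) < 0$ and $f_n(1) > 0$ gives $f(0) \le 0 \le f(1)$, so continuity of $f$ yields a root, and strict monotonicity makes it unique; call it $\tilde{r}$. The whole statement then follows once I show $\tilde{r}_n \to \tilde{r}$, because $f(\tilde{r}) = 0$ holds by the very definition of $\tilde{r}$.

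For the convergence I would work with $\rlinf = \liminf_n \tilde{r}_n$ and $\rlsup = \limsup_n \tilde{r}_n$ and establish $\tilde{r} \le \rlinf \le \rlsup \le \tilde{r}$. To prove $\rlsup \le \tilde{r}$, fix any $c$ with $\tilde{r} < c \le 1$ (if $\tilde{r} = 1$ the bound is trivial). Strict monotonicity of $f$ gives $f(c) > 0$, so pointwise convergence yields $f_n(c) > 0$ for all large $n$. Since $f_n(\tilde{r}_n) = 0 < f_n(c)$ and $f_n$ is increasing, we get $\tilde{r}_n < c$ for all large $n$, whence $\rlsup \le c$; letting $c \downarrow \tilde{r}$ gives $\rlsup \le \tilde{r}$. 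The inequality $\rlinf \ge \tilde{r}$ is symmetric: for any $0 \le c < \tilde{r}$ one has $f(c) < 0$, hence $f_n(c) < 0$ eventually, and monotonicity together with $f_n(\tilde{r}_n) = 0$ forces $\tilde{r}_n > c$ eventually. Combining the two bounds with $\rlinf \le \rlsup$ pins $\rlinf = \rlsup = \tilde{r}$, so $\lim_n \tilde{r}_n = \tilde{r}$ and $f(\tilde{r}) = 0$.

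The one genuine subtlety, and the step I would flag as the crux, is that pointwise convergence $f_n \to f$ gives no direct control over $f_n(\tilde{r}_n)$, since the evaluation point $\tilde{r}_n$ moves with $n$. The device that resolves this is precisely the monotonicity of each $f_n$: it lets me compare the moving root against a \emph{fixed} test point $c$, where pointwise convergence does apply, trading the uncontrolled quantity $f_n(\tilde{r}_n)$ for the controlled one $f_n(c)$. I would also remark that strict monotonicity of the limit $f$ is essential rather than merely convenient: were $f$ only nondecreasing it could vanish on an entire interval, the root would fail to be unique, and the $\tilde{r}_n$ could oscillate between the endpoints of that interval without converging, so the conclusion would be false. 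Beyond this bracketing idea nothing is needed but the intermediate value theorem and the definition of pointwise convergence, so the remainder of the argument is routine.
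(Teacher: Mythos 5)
Your proof is correct, and it takes a genuinely different --- and more elementary --- route than the paper's. The paper first upgrades pointwise convergence to \emph{uniform} convergence on $[0,1]$, invoking the classical fact that monotone functions converging pointwise to a continuous limit on a compact interval converge uniformly, and then evaluates along moving points: choosing a subsequence $\tilde r_{n_k} \to \liminf_n \tilde r_n$, it uses uniform convergence to get $0 = f_{n_k}(\tilde r_{n_k}) \to f(\liminf_n \tilde r_n)$, concludes that both $\liminf_n \tilde r_n$ and $\limsup_n \tilde r_n$ are roots of $f$, and then lets uniqueness of the root force them to coincide. You avoid uniform convergence entirely: by testing against \emph{fixed} points $c$ on either side of the limit root $\tilde r$ and using monotonicity of $f_n$, you trade the uncontrolled quantity $f_n(\tilde r_n)$ for the controlled quantities $f_n(c)$, where plain pointwise convergence suffices; this is exactly the crux you flag, and it is handled correctly. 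Your argument is self-contained (no appeal to the two textbook results the paper cites) and makes the hypotheses transparent: the $f_n$ need only be nondecreasing, while strict monotonicity of the limit $f$ is what gives the sign conditions $f(c)>0$ for $c>\tilde r$ and $f(c)<0$ for $c<\tilde r$, and your remark that it cannot be weakened (a limit vanishing on an interval allows the roots $\tilde r_n$ to oscillate) is accurate. What the paper's two-step structure buys in exchange is modularity: its moving-point argument only needs uniform convergence, continuity, and uniqueness of the root of $f$, so that half of the proof ports to situations where the $f_n$ are not monotone but uniform convergence is known by other means, whereas your bracketing argument is intrinsically tied to monotonicity of the $f_n$.
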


\begin{proof}[Proof of Lemma~\ref{L: Conv}]
First, we observe that, under the stated conditions, the sequence $\{f_n\}$ converges uniformly to $f$ on $[0, 1]$ (see, e.g., \citet[Problem~9.19]{AB}).
Let $\rlinf = \liminf_{n \to \infty} \tilde r_n$ and $\rlsup = \limsup_{n \to \infty} \tilde r_n$.
Then, there exists a subsequence $\{\tilde r_{n_k}\}$ such that $\tilde r_{n_k} \to \rlinf$ as $k \to \infty$.
Consequently, $f_{n_k} (\tilde r_{n_k}) \to f(\rlinf)$ as $k \to \infty$ (by applying \citet[Problem~9.13a]{AB}). 
From this, it follows that $f(\rlinf) = 0$. 
Analogously, $f(\rlsup) = 0$.
But since $f$ is an increasing, continuous function with $f(0) \leq 0 \leq f(1)$, we conclude that $f$ has a unique root in the interval $[0, 1]$.
Hence, $\rlinf = \rlsup$, and the proof is complete.
\end{proof}

\begin{proof}[Proof of Theorem~\ref{T: UBA}]
It is straightforward to prove that there exists a positive integer $N=N(\dd)$ such that $\{f^{(\dd, q)}_n\}_{n\geq N}$ and $f^{(\dd, q)}$ defined in \eqref{F: f_n} and \eqref{F: Function-f} satisfy the conditions of Lemma~\ref{L: Conv}. 
Therefore, by defining $\tilde{p}_n(\dd, q)$ as the unique root of $f^{(\dd, q)}_n$ in the interval $[0, 1]$, it follows that 
\[ \lim_{n\to\infty}\tilde{p}_n(\dd, q)=\tilde{p}(\dd, q), \]
where the limit is the unique root of $f^{(\dd, q)}(p)$ in $(0, 1)$. 
Since $f^{(\dd, q)}_n(p)>0$ for every $p>\tilde{p}_{n}(\dd, q)$, from Lemma~\ref{P: BPdom}, we have that
\[\pc{\bbT_{\dd}, \eta}\leq \tilde{p}_{n}(\dd, q).\]
Taking $n \to \infty$, the results follows.
\end{proof}

\begin{proof}[Proof of Corollary~\ref{C: UB bi-re}]
If $\eta\equiv 1$, then the function $f^{(\dd, q)}(p)$ simplifies to
\[ f^{(\dd)}(p) = \alpha(p)\beta(p)[2-\alpha(p)][2-\beta(p)]-\frac{1}{d_1d_2}. \]
The upper bound $\tilde{p}(d_1, d_2)$ for $\pc{\bbT_{\dd}}$ is the unique root in $(0, 1)$ of this continuous and increasing function.
Let
\[ \bar{p} = \bar{p}(\dd) = \frac{1}{2}\sqrt{\frac{(d_1 + 1)(d_2 + 1)}{d_1 d_2}} \]
denote the upper bound given in the statement of Corollary~\ref{C: UB bi-re}.
The result follows from the fact that 
$f^{(\dd)}(\bar{p})\geq 0$.
This inequality can be verified with the help of a symbolic computation software.
For instance, in Mathematica, one can define the function $f^{(\dd)}(p)$ and $\bar{p}(\dd)$, and then use the command Reduce, to show that the statement
\[ f^{(\dd)}(\bar{p}(\dd)) \geq 0 \land (d_1 \geq 2 \land d_2 \geq 1) \lor (d_1 \geq 1 \land d_2 \geq 2) \]
is equivalent to
\[ (1 \leq d_1 < 2 \land d_2 \geq 2) \lor (d_1 \geq 2 \land d_2 \geq 1). \qedhere \]
\end{proof}

\section{Extinction of the process}
\label{S: Extinction}

To prove Theorem~\ref{T: SCE}, we define a coupled percolation process that dominates the frog model.
This percolation model is defined in a simple manner, not taking into account the trajectories of the frogs, so a coupling between the process on two graphs $\GG_1$ and $\GG_2$, with $\GG_1 \subset \GG_2$, is easily constructed.
The end of the proof relies on the idea employed to establish Theorem~1.2 in \citet{PT}.

\begin{proof}[Proof of Theorem~\ref{T: SCE}]
Let $\GG$ be an infinite tree of bounded degree, and let~$\Delta$ denote the maximum degree of $\GG$.
Recall that the extinction of the frog model on $\GG$ is equivalent to the finiteness of the cluster of the root in the percolation model on~$\GGO$ that is defined in Subsection~\ref{SS: FM Perc}.
Given a realization of $\FM{\GG}{p, \eta}$, we define, for every $x \in \VV$,
\begin{equation*}
\mcB_x:=
\left\{
\begin{array}{cl}
\bigcup_{k=1}^{\eta(x)} \{ y \in \VV: \dist(x,y) < \Xi_p^x(k) \} &\text{if } \eta(x) \geq 1, \\[0.2cm]
\{x\} &\text{otherwise}. 
\end{array}	\right.
\end{equation*}
Then, we consider the following oriented percolation model on~$\GGO$: the oriented edge $\vxy$ from vertex $x$ to vertex $y$ is declared to be open if $[y \in \mcB_x]$, and closed otherwise.
We call this model the \textit{disk-percolation model} on $\GG$, with parameter~$p$ and initial configuration ruled by $\eta$.
\citet{LR} study this model with $\eta \equiv 1$ on general graphs and spherically symmetric trees.
Since $\mcR_x \subset \mcB_x$ for every vertex $x$, we have that the frog model on $\GG$ dies out if the cluster of the root in the disk-percolation model on $\GG$ has finite size.

Now let $d = \max\{\Delta - 1, 2\}$.
Viewing $\GG$ as a subgraph of $\bbT_d$, we conclude that there is a natural coupling between the disk-percolation model on $\GG$ and on $\bbT_{d}$, in such a way that the cluster of the root on $\GG$ is finite, whenever it is finite on $\bbT_{d}$.
In addition, the disk-percolation model on $\bbT_{d}$ is dominated by a Galton--Watson branching process whose family size is distributed as $|\mcB_x \setminus \{x\}|$ (where $x$ is a fixed vertex of~$\bbT_d$).
Consequently, to finish the proof, it suffices to show that this branching process is subcritical for small enough $p > 0$.

The remainder of the proof follows the argument used to prove Theorem~1.2 in \citet{PT}, which we adapt here to our context.
For vertices $x$ and $y$ with $\dist(x,y) = k \geq 1$, by conditioning on~$\eta(x)$, we have
\begin{equation}
\label{F: DP}
\dsP[y \in \mcB_x] = \sum_{i=1}^{\infty} \rho_{i} \left[1 - (1 - p^k)^i\right].
\end{equation}
Let $\hat{k}(i,p)=\lfloor\log i/\log(1/p)\rfloor$, where $\lfloor x\rfloor$ denotes the largest integer
which is less than or equal to~$x$.
By using elementary calculus, one proves that there exists a constant $\hat{\beta}$ such that
\begin{equation}
\label{F: Phi}
1 - (1 - p^k)^i \leq \hat{\beta} \, p^{k-\hat{k}(i,p)-1} \, \text{ for every } k \geq \hat{k}(i,p) + 1.
\end{equation}
Given a vertex $x$ of $\bbT_d$, let $s_k(\bbT_d) = (d+1) \, d^{k-1}$ denote the cardinality of the set of vertices that are at distance $k$ from $x$.
Using~\eqref{F: DP} and \eqref{F: Phi}, we obtain that, for some positive constants $C_1$ and $C_2$,
\begin{align*}
\dsE |\mcB_x\setminus\{x\}| &= \sum_{k=1}^\infty s_k(\bbT_d) \sum_{i=1}^{\infty} \rho_{i} \left[1 - (1 - p^k)^i\right]\\
&\leq \sum_{i=1}^\infty \rho_i \, \Bigl[ \sum_{k=1}^{\hat{k}(i,p)}
s_k(\bbT_d) + \sum_{k=1}^\infty \hat{\beta} \, s_{\hat{k} + k}(\bbT_d) \, p^{k-1} \Bigr]\\
&\leq C_1 \displaystyle\sum_{i=1}^\infty \rho_i \, i^{\frac{\log d}{\log(1/p)}} + C_2.
\end{align*}
For some $p_0 > 0$, the last series converges uniformly in~$[0, p_0]$.
Hence, there exists a small enough~$p > 0$ such that $\dsE|\mcB_x\setminus\{x\}|<1$, completing the proof.
\end{proof}

Our strategy to show Theorem~\ref{T: LB} is to compare the frog model on $\bbT_{\dd}$ with a suitable subcritical multitype Galton--Watson branching process.
The method is inspired by the idea of comparing the frog model with an ordinary branching process that is used by \citet{PT} to prove Proposition~\ref{P: LB Alves}.

\begin{proof}[Proof of Theorem~\ref{T: LB}]
Consider the multitype Galton--Watson branching process with two types, defined as follows.
It starts with zero particles of type $1$, and a random number of particles of type $2$, which is given by the sum of $(d_1 + 2)$ independent copies of $\eta$.
For $i = 1, 2$ and $k_1, k_2 \in \bbN_0$, let $\mathbf{p}^{(i)}(k_1, k_2)$ denote the probability that a type~$i$ particle produces $k_1$ particles of type~$1$, and $k_2$ particles of type~$2$.
Recall that $\rho_k := \dsP[\eta = k]$, and suppose that the progeny distribution is given by
\begin{alignat*}{3}
\mathbf{p}^{(1)}(0, 0) &= 1-p, \qquad &\mathbf{p}^{(1)}(0, 1) &= \frac{p (1 + d_1 \rho_0)}{d_1+1}, 
\qquad &\mathbf{p}^{(1)}(0, k) &= \frac{pd_1 \rho_{k-1}}{d_1+1}, \; k=2, 3, \dots \\[0.2cm]
\mathbf{p}^{(2)}(0, 0) &= 1-p, &\mathbf{p}^{(2)}(1, 0) &= \frac{p (1 + d_2 \rho_0)}{d_2+1}, 
&\mathbf{p}^{(2)}(k, 0) &= \frac{pd_2 \rho_{k-1}}{d_2+1}, \; k=2, 3, \dots
\end{alignat*}

Notice that, in the frog model on~$\bbT_{\dd}$, at time $n = 1$, the number of awake frogs placed on type $1$ and type $2$ vertices is stochastically smaller than the initial configuration of the multitype Galton--Watson process just defined.
Furthermore, in the frog model, every vertex with at least one awake frog at time $n \geq 1$ has at least one neighbor vertex whose original frogs have been awoken prior to time $n$.
Thus, the multitype Galton--Watson process dominates the frog model on $\bbT_{\dd}$, in the sense that the frog model becomes extinct if this process does. 

Now let $M:=(m_{i, j})_{i, j\in\{1, 2\}}$ be the first moment matrix, i.e., $m_{i, j}$ is the expected number of type~$j$ offspring of a single type~$i$ particle in one generation. 
Since the number of types is finite, it is well known (see~\citet[Chap.~V]{BP}) that the multitype Galton--Watson process dies out almost surely if and only if the largest eigenvalue $\lambda(M)$ of the matrix $M$ is less than~$1$.
In our case, the first moment matrix is given by
\begin{equation*}
M = \begin{bmatrix}
0 & \frac{p}{d_1+1}(1+d_{1}(\dsE\eta+1)) \\[0.2cm]
\frac{p}{d_2+1}(1+d_{2}(\dsE\eta+1)) & 0
\end{bmatrix}.
\end{equation*}
An elementary calculation shows that if 
\[ p<\sqrt{\frac{(d_1+1) (d_2+1)}{[d_1(\dsE\eta+1)+1] [d_2(\dsE\eta+1)+1]}}, \] 
then $\lambda(M) < 1$, therefore the multitype Galton--Watson process dies out almost surely. 
Consequently, the same happens to the frog model. 
\end{proof}

\section*{Acknowledgements}

The authors are grateful to two anonymous referees, who carefully read the paper and made many valuable questions and suggestions that helped us to improve it.

\bibliography{bibPTBT}
\bibliographystyle{plainnat}

\end{document}